\documentclass[12pt]{article}
\usepackage{mathtools}
\usepackage{amsmath}
\usepackage{amsfonts}
\usepackage{latexsym}
\usepackage{amssymb}
\usepackage{stmaryrd}
\usepackage{overpic}
\usepackage{graphicx}
\newcommand{\safeincludegraphics}[2][]{%
  \IfFileExists{#2}{\includegraphics[#1]{#2}}{%
    \fbox{\parbox[c][0.20\textheight][c]{0.78\linewidth}{\centering
    Placeholder for Figure\\[0.5ex]\texttt{\detokenize{#2}}}}%
  }%
}
\usepackage{multirow}
\usepackage{mathrsfs} 
\usepackage{float}  
\usepackage{color}
\usepackage{subcaption}
\usepackage{booktabs}
\newtheorem{theorem}{Theorem}[section]

\newtheorem{proposition}[theorem]{Proposition}
\newtheorem{remark}[theorem]{Remark}
\newenvironment{proof}[1][Proof]{\textbf{#1.} }
{\ \rule{0.75em}{0.75em}\smallskip}

\begin{document}

\begin{center}
\large\bf {Fourier Neural Operators with Least-Squares Readout Refit for Learning Random Obstacle-to-Solution Maps}
\end{center}

\begin{center}
Chenhui Zhu\footnote{School of Mathematics and Statistics, Xi'an Jiaotong University,
Xi'an, Shaanxi 710049, P. R. China. Email: {chzhu@stu.xjtu.edu.cn}}\quad and \quad
Fei Wang\footnote{School of Mathematics and Statistics \& State Key Laboratory of Multiphase Flow in Power Engineering, Xi'an Jiaotong University, Xi'an, Shaanxi 710049, China. The work of this author was partially supported by National Key R\&D Program of China (2025YFA1016400) and National Natural Science Foundation of China (Grant No.\ 92470115). Email: {\tt feiwang.xjtu@xjtu.edu.cn}}

\end{center}
\bigskip
\begin{quote}

{\bf Abstract.}
We study operator learning for random obstacle-to-solution maps arising from elliptic variational inequalities with finite-band self-affine random obstacle fields. Instead of introducing an explicit truncated stochastic parametrization of the random input, we learn the map directly from sampled obstacle realizations on a fixed grid. This problem is challenging because the solution is governed not only by the obstacle field itself, but also by the induced contact set and free-boundary geometry. We introduce a post-training least-squares readout refit for the Fourier neural operator (FNO). After the FNO is trained end to end, its nonlinear backbone is frozen and the final affine readout is recomputed by solving the induced linear least-squares problem over all training samples and grid points. The refit yields the empirical squared-error optimal readout for the learned frozen features while leaving the nonlinear representation unchanged. We compare vanilla DeepONet, POD-DeepONet, a two-stage DeepONet baseline, FNO, and FNO with least-squares readout refit (FNO-LS) on two obstacle ensembles with different amplitude levels. Numerical results show that FNO-LS achieves the strongest overall performance among the tested models, particularly for higher-amplitude obstacles with more complex contact geometry. The method improves average field accuracy, contact-set recovery, and obstacle-violation metrics at low additional cost, especially when the FNO backbone is informative but not fully converged. These results suggest that least-squares readout refit is a simple and effective post-training enhancement for learning random obstacle-to-solution maps.

{\bf Keywords.} obstacle problem, variational inequality, random obstacle field, operator learning, Fourier neural operator, DeepONet

{\bf AMS Classification.} 35R60, 65K15, 68T07, 65N12
\end{quote}

\section{Introduction}

Obstacle problems are prototypical elliptic variational inequalities and free-boundary problems. They arise in contact mechanics, lubrication, elastostatics, optimal stopping, and many other settings in which unilateral constraints are intrinsic to the model. Their distinctive difficulty is that the contact set (coincidence set), and hence the free boundary, is not known a priori and must be determined together with the solution. This makes both analysis and computation substantially more delicate than for standard elliptic boundary value problems (\cite{DL1976,Ro1987,Kinderlehrer2000,TLG1981}).

In many applications, obstacle problems must be solved repeatedly for varying inputs. Uncertainty may enter through coefficients, forcing terms, boundary data, or the obstacle itself, leading naturally to stochastic obstacle problems and, more broadly, stochastic variational inequalities. Classical approaches include stochastic Galerkin methods (\cite{Babuska2002,Babuska2004}), stochastic collocation \cite{Babuska2007}, polynomial-chaos approximations (\cite{Xiu2007,Forster2010}), and multilevel Monte Carlo finite element methods (\cite{Kornhuber2014}). A common strategy is to first represent the random input by a finite-dimensional parametrization, for example through a truncated Karhunen--Lo\`eve expansion, and then solve the resulting deterministic-parametric family (\cite{Schwab2006,Todor2007}). This route is mathematically natural, but it becomes increasingly expensive when the random field is rough or strongly multiscale, since many modes may be needed to resolve the input accurately.

Existing work relevant to this paper can be grouped into four strands. First, there is a large classical literature on deterministic obstacle problems and variational inequalities, including finite element (\cite{Glowinski1984,Hlavacek1988}), discontinuous Galerkin (\cite{wang2010discontinuous, wang2014discontinuous}), and virtual element methods (\cite{wang2018friction,wang2020obstacle}). Second, stochastic obstacle problems and stochastic variational inequalities have been studied using stochastic Galerkin (\cite{Zhu2026Obstacle}), polynomial-chaos approximations (\cite{Forster2010}) and Monte Carlo techniques (\cite{Kornhuber2014,Bierig2015}), with limited regularity of the random input remaining a central challenge. Third, on the scientific machine learning side, several neural-network methods have been proposed as single-instance solvers for obstacle-type problems, including finite-element and quadratic-programming based neural approaches (\cite{Darehmiraki2022Obstacle}), energy minimization methods (\cite{Zhao2022Obstacle}), penalty-based formulations (\cite{Cheng2023ObstacleDNN}), residual-based PINN approaches (\cite{ElBahja2025PINNObstacle}), weak adversarial minmax methods (\cite{Alphonse2024EVI}), and proximal PINN formulations (\cite{Gao2025ProxPINNs}). Fourth, at the operator-learning level, solution operators for variational inequalities have been studied using proximal neural-network constructions (\cite{SchwabStein2022ProxNet}), and recent work has begun to address high-dimensional parametric obstacle problems with specialized multilevel architectures (\cite{EigelHeissSchutte2025}). Direct operator learning from rough random obstacle fields, however, remains much less explored than either single-instance neural solvers or smoother parametric operator-learning settings.

Compared with standard elliptic solution operators, the obstacle-to-solution map contains an additional geometric difficulty: small changes in the obstacle can move the contact set, create or remove connected components, or shift thin free-boundary branches. Consequently, a surrogate may achieve a small global field error while still misidentifying the contact region or violating the unilateral constraint. This motivates both the use of grid-based neural operators and the structure-sensitive evaluation metrics adopted in this work.

Among neural-operator architectures, DeepONet (\cite{Lu2021DeepONet}) and the FNO are two of the most widely used models (\cite{Li2021FNO,Kovachki2023NeuralOperator}). DeepONet is based on a branch--trunk decomposition and is flexible across a wide range of operator-learning settings. FNO, by contrast, propagates information through spectral convolution and is often particularly effective for grid-based problems with long-range interactions and multiscale structure. More broadly, recent theory shows that generic operator learning may suffer from a curse of parametric complexity unless additional structure is exploited (\cite{LanthalerStuart2026}); this provides further motivation for studying problem-adapted architectures on specific classes of variational inequalities. Because the rough random obstacles considered here contain oscillations across many spatial scales, FNO is a natural candidate for the present problem.

In this paper, we propose FNO-LS, an FNO-based operator learning method with a post-training least-squares readout refit for random obstacle-to-solution maps. We formulate the task as direct fixed-grid field-to-field learning: the input is a sampled random obstacle field, and the output is the corresponding variational-inequality solution on the same grid. After an FNO is trained end to end, we freeze its nonlinear backbone and recompute only the final affine readout by solving the induced linear least-squares problem over all training samples and grid points. This yields the empirical squared-error optimal readout for the learned frozen features at low additional cost, while leaving the nonlinear representation unchanged.

We compare the resulting FNO-LS method with vanilla DeepONet, POD-DeepONet, a two-stage DeepONet baseline (\cite{Lee2024}), and standard FNO on two obstacle ensembles with different amplitude levels. The experiments show that FNO-LS gives the strongest overall performance among the tested models, especially in the higher-amplitude regime where the contact geometry is more intricate. To reflect the variational-inequality structure of the problem, we report not only relative \(L^2\) field error but also contact-set intersection-over-union and obstacle-violation metrics, which assess contact-region recovery and satisfaction of the unilateral constraint.

The rest of the paper is organized as follows. In Section~2, we review the deterministic and random obstacle problems and introduce the random obstacle model used in the experiments. In Section~3, we present the neural-operator viewpoint together with the proposed least-squares readout refit strategy. In Section~4, we report the numerical results and discussion.

\section{Deterministic and random obstacle problems}\label{sec2}
\setcounter{equation}0

\subsection{The classical obstacle problem}

Obstacle problems are prototypical elliptic variational inequalities and free-boundary problems (\cite{Ro1987,Atkinson2009Theoretical}). A standard mechanical interpretation is that of an elastic membrane constrained to stay above a rigid obstacle. Let $D\subset \mathbb{R}^n$ be an open, bounded Lipschitz domain, let $f$ be a forcing term, let $a$ be a diffusion coefficient, and let $g$ be the obstacle function. The solution $u$ is required to satisfy the inequality constraint $u\ge g$ and the governing elliptic equation away from the contact set.

The weak formulation reads as follows: find $u\in K$ such that
\begin{align}
\label{variation_inequality}
\int_D a\nabla u\cdot \nabla(v-u)\,\mathrm{d}x
\ge
\int_D f(v-u)\,\mathrm{d}x
\qquad \forall v\in K,
\end{align}
where $f\in L^2(D)$, $a\in L^\infty(D)$, and
\[
K:=\{v\in H_0^1(D)\,:\, v\ge g \ \text{a.e. in } D\}.
\]
Here $g$ denotes the obstacle height.  Under the standard assumptions $a(x)\ge a_0>0$ almost everywhere, $f\in L^2(D)$, and $g\in H^1(D)$ with $g\le 0$ on $\partial D$, problem \eqref{variation_inequality} admits a unique weak solution; see \cite[page 3]{Glowinski1984} and \cite[Theorem 11.3.9]{Atkinson2009Theoretical}.

A distinctive feature of the obstacle problem is that the contact set
\[
\mathcal{C}:=\{x\in D:\ u(x)=g(x)\}
\]
is not known in advance and must be determined together with the solution. This free-boundary structure is one of the main sources of computational difficulty.

For notational simplicity, we present the obstacle problem with homogeneous Dirichlet boundary conditions. More generally, one may prescribe nonhomogeneous Dirichlet data $u_D \in H^{1/2}(\partial D)$ and consider the admissible set
\[
K_{g,u_D}:=\{v\in H^1(D): \operatorname{Tr}(v)=u_D,\ v\ge g \ \text{a.e. in }D\}.
\]
Introducing a lifting function $\ell\in H^1(D)$ with $\operatorname{Tr}(\ell)=u_D$ and setting $\widetilde u=u-\ell$, the problem is reduced to a variational inequality on a closed convex subset of $H_0^1(D)$ with the same bilinear form and a modified linear functional. Hence, under the same assumptions on the domain, coefficient, source term, and obstacle, together with the nonemptiness of the admissible set, the standard existence and uniqueness theory still applies (\cite{Glowinski1984,Atkinson2009Theoretical,Ro1987,Kinderlehrer2000}).

\subsection{The random obstacle problem}

In many applications, the coefficient \(a\), the source term \(f\), and the obstacle function \(g\) are not known exactly and should be modeled as random fields.

To formulate the random obstacle problem, we briefly introduce the underlying functional setting. Let \((\Omega,\mathcal{F},\mathbb{P})\) be a probability space, where \(\Omega\) denotes the set of elementary events, \(\mathcal{F}\) is a \(\sigma\)-algebra, and \(\mathbb{P}\) is a probability measure. Let \(D\subset\mathbb{R}^n\) be a bounded Lipschitz domain. If \(X\) is a Banach space of real-valued functions defined on \(D\), then \(L^p(\Omega;X)\), \(1\le p<\infty\), denotes the Bochner space of all \(X\)-valued random variables \(v\) such that
\[
\|v\|_{L^p(\Omega;X)}^p
:=
\int_\Omega \|v(\cdot,\omega)\|_X^p\,\mathrm{d}\mathbb{P}(\omega)
<\infty.
\]
For \(p=\infty\), the norm is defined by
\[
\|v\|_{L^\infty(\Omega;X)}
:=
\operatorname*{ess\,sup}_{\omega\in\Omega}\|v(\cdot,\omega)\|_X.
\]

We now consider the random obstacle problem (\cite{Todor2007,Forster2010,Zhu2026Obstacle}): Given \(a \in L^{\infty}\left(\Omega; L^{\infty}(D)\right)\), \( f \in L^2(\Omega; L^2(D))\), \(g \in L^{2}\left(\Omega; H^{1}(D)\right)\), find a function $u$ such that for almost every \( \omega \in \Omega \), \( u(\cdot, \omega) \in K_{g(\omega)} \) and the following inequality holds:
{\small \begin{equation}
	\label{eq2}
	\int_{D} a(x, \omega) \nabla u({ x}, \omega) \cdot \nabla (v({ x})-u({x}, \omega)) \, \mathrm{d}x \geq \int_{D} f(x, \omega)(v({x})-u({ x}, \omega)) \, \mathrm{d} x \quad\forall\,v \in K_{g(\omega)},
\end{equation}} where \(K_{g(\omega)} = \{v \in H_0^1(D) \mid v \geq g(\cdot, \omega) \text{ a.e. in } D\}\). {The gradient operator \(\nabla\) refers to differentiation with respect to $x \in D$ unless otherwise stated.} When sufficient regularity is available, the corresponding pointwise formulation is to find a random function $u(x,\omega)$ such that (cf.\ \cite[Example 11.1.1]{Atkinson2009Theoretical})
\begin{equation}
    \label{obstacle_inequality}
\begin{cases}
-\nabla\cdot\bigl(a(x,\omega)\nabla u(x,\omega)\bigr)\ge f(x,\omega), & x\in D,\\[1mm]
u(x,\omega)\ge g(x,\omega), & x\in D,\\[1mm]
\bigl(-\nabla\cdot(a(x,\omega)\nabla u(x,\omega))-f(x,\omega)\bigr)
\bigl(u(x,\omega)-g(x,\omega)\bigr)=0, & x\in D,\\[1mm]
u(x,\omega)=0, & x\in \partial D.
\end{cases}
\end{equation}

As in the deterministic case, the contact set
\[
\mathcal{C}_{g(\omega)}:=\{x\in D:\ u(x,\omega)=g(x,\omega)\}
\]
is unknown.

{
If there exist constants \(0<a_{\min}\le a_{\max}<\infty\) such that
\[
a_{\min}\le a(x,\omega)\le a_{\max}
\quad\text{for a.e. }(x,\omega)\in D\times\Omega,
\]
and if \(f(\cdot,\omega)\in L^2(D)\), \(g(\cdot,\omega)\in H^1(D)\), and
\(K_{g(\omega),u_D}\neq \varnothing\) for almost every \(\omega\in\Omega\), then for almost every \(\omega\in\Omega\) the random variational inequality has a unique weak solution.
}

From the operator-learning viewpoint, it is natural to regard the solution as a map defined on an admissible class of obstacle fields. If \(a\) and \(f\) are fixed, we write
\begin{equation}\label{eq:parameter_to_solution_map}
\mathcal{G}_{\mathrm{obs}}:\mathcal{X}_{\mathrm{ad}}\to H^1(D),\qquad
g\mapsto u(g),
\end{equation}
where \(\mathcal{X}_{\mathrm{ad}}\) denotes a class of admissible obstacles satisfying the appropriate boundary compatibility condition. If the coefficient, source term, and obstacle all vary, one may instead consider the more general solution map
\[
\mathcal{G}:(a,f,g)\mapsto u.
\]
In this paper, \(a\) and \(f\) are fixed and only the obstacle varies. Thus the operator-learning task is the obstacle-to-solution map \(g\mapsto u\).

We emphasize that, unlike many classical approaches to stochastic PDEs and stochastic variational inequalities, we do not introduce a KL expansion of the random obstacle field \(g\) and then truncate it to obtain a finite-dimensional parametric problem. In the traditional framework, one first represents the random input by a finite number of random variables through a truncated KL expansion, and then applies stochastic Galerkin, stochastic collocation, or related high-dimensional approximation techniques to the resulting deterministic-parametric problem (\cite{Schwab2006,Babuska2002,Babuska2004,Babuska2007,Todor2007}). Although this strategy is mathematically natural, it generally introduces an additional modeling error through truncation of the random field, and the resulting parametric problem may become high-dimensional when many KL modes are needed to resolve rough or strongly oscillatory inputs. This substantially increases the computational burden and may lead to the curse of dimensionality. In contrast, the present work adopts a direct operator-learning viewpoint: we learn the map from the random obstacle field \(g\) to the corresponding solution \(u\) directly from sampled field realizations, without constructing an explicit finite-dimensional stochastic parametrization of \(g\). This avoids introducing an additional KL truncation of the sampled obstacle field at the learning stage.

Following the rough-surface model in~\cite{Bierig2015} and the power-spectrum description in~\cite{Persson2006}, we generate the finite-band self-affine random obstacle ensemble. Specifically, the obstacle is generated by the finite Fourier expansion
\begin{equation}\label{eq:obstacle_function}
g(x)=\sum_{q\in Q} B_q(H;\alpha)\cos\bigl(q\cdot x+\phi_q\bigr),
\qquad x\in D,
\end{equation}
where the sum is taken over the wave-vector set
\[
Q:=\{q\in(\pi\mathbb{Z})^2:\ 1\le \|q\|_2\le 26\}.
\]
The phase shifts \(\{\phi_q\}_{q\in Q}\) are independent random variables uniformly distributed on \([0,2\pi]\). Thus the obstacle is represented as a finite superposition of Fourier modes with random phases, with modal amplitudes controlled by the Hurst exponent \(H\) and the scale parameter \(\alpha\).

The amplitudes are chosen according to the standard self-affine roughness scaling. In such models, the surface roughness power spectrum satisfies
\begin{equation}\label{eq:self-affine-spectrum}
C(q)\sim \|q\|_2^{-2(H+1)},
\end{equation}
where \(H\in[0,1]\) is the Hurst exponent. Since Fourier amplitudes scale like the square root of the power spectrum, smaller values of \(H\) assign relatively more weight to high-frequency modes and therefore produce rougher obstacle profiles.

In the experiments, the modal amplitudes are chosen as
\begin{equation}\label{eq:Bq-general}
B_q(H;\alpha)
=
\alpha\bigl(2\pi\max(\|q\|_2,10)\bigr)^{-(H+1)},
\qquad q\in Q,
\end{equation}
and set \(B_q(H;\alpha)=0\) for \(q\notin Q\). The parameter \(\alpha>0\) controls the overall scale of the obstacle. The random variables are sampled as
\begin{equation}\label{eq:randomness}
H\sim\mathcal{U}(0,1),
\qquad
\phi_q\sim\mathcal{U}(0,2\pi),
\end{equation}
with \(H\) and all phase variables mutually independent.

Since the wave-vector set \(Q\) is finite, every sampled obstacle realization is a smooth trigonometric polynomial in \(x\). The computational difficulty addressed here therefore does not stem from low Sobolev regularity of the obstacle, but rather from its oscillatory finite-band geometry and the resulting sample-dependent contact set. 

\section{Neural operator architectures for random obstacle problems}
\setcounter{equation}0
\subsection{Deep Operator Networks}

DeepONet is a neural-operator architecture for approximating nonlinear operators between function spaces (\cite{Lu2021DeepONet}). Let
\[
\mathcal{G}:\mathcal{U}\to\mathcal{V}
\]
be an operator mapping an input function $g\in\mathcal{U}$ to an output function $\mathcal{G}(g)\in\mathcal{V}$. In DeepONet, the value of the output at a query point $y$ is represented by combining two subnetworks: a branch network that encodes the input function and a trunk network that encodes the evaluation coordinate.

For computation, the input function is sampled at sensor locations $\{x_1,x_2,\dots,x_m\}$ and represented by
\[
\bigl(g(x_1),g(x_2),\dots,g(x_m)\bigr)\in\mathbb{R}^m.
\]
This vector is passed to the branch network. The trunk network takes the coordinate $y$ as input. If the branch and trunk outputs are denoted by $b(g)\in\mathbb{R}^p$ and $t(y)\in\mathbb{R}^p$, respectively, then the DeepONet approximation has the form
\begin{equation}\label{eq:deeponet_general}
\mathcal{G}_{\theta}(g)(y)=\sum_{i=1}^{p} b_i(g)\,t_i(y) + b_0,
\end{equation} where $\theta$ collects the trainable parameters. Thus, DeepONet represents the solution operator through a data-dependent coefficient vector, a coordinate-dependent basis, and a scalar bias term.

\begin{figure}[htbp]
    \centering
    \safeincludegraphics[width=1\linewidth, page=1]{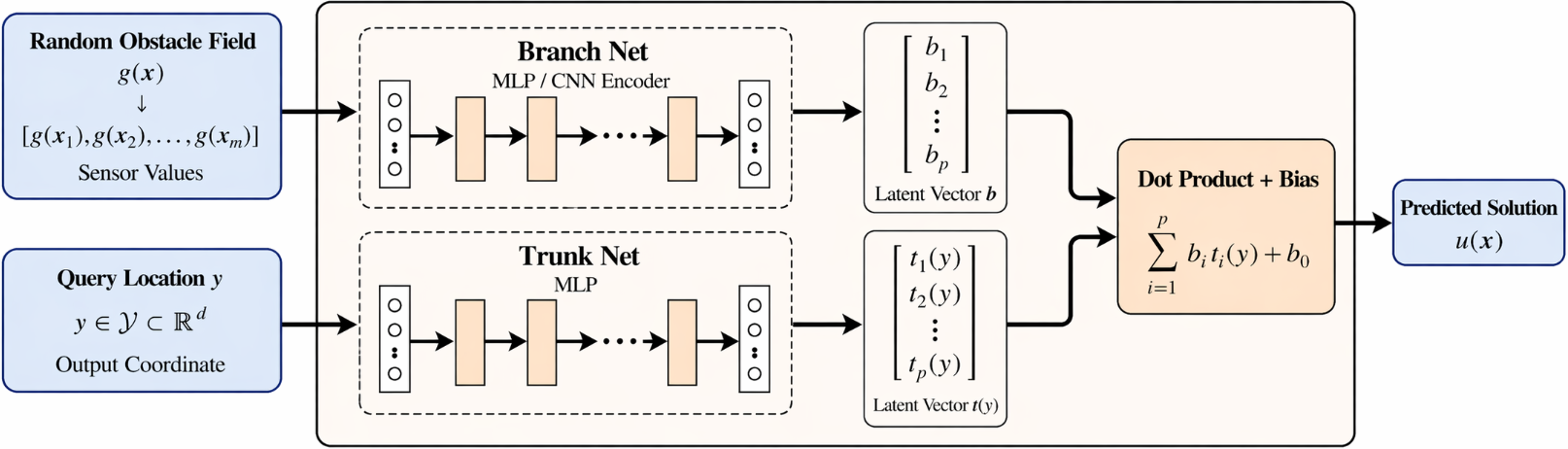}
    \caption{Schematic of the DeepONet architecture. The branch network encodes the sampled input function, while the trunk network encodes the query coordinate. Their outputs are combined to produce the predicted operator value at the query point.}
    \label{fig:deeponet_architecture}
\end{figure}

Given training data
\[
\left\{ \left(g^{(n)}, y_j^{(n)}, u_j^{(n)}\right) \right\}_{n=1,j=1}^{N,q},
\]
where
\[
g^{(n)}=\left(g^{(n)}(x_1),g^{(n)}(x_2),\ldots,g^{(n)}(x_m)\right), 
\qquad
u_j^{(n)} = \mathcal{G}\!\left(g^{(n)}\right)\!\left(y_j^{(n)}\right),
\]
the parameters are typically learned by minimizing
\[
L(\theta)
=
\frac{1}{Nq}
\sum_{n=1}^N \sum_{j=1}^q
\left|
\mathcal{G}_\theta\!\left(g^{(n)}\right)\!\left(y_j^{(n)}\right)-u_j^{(n)}
\right|^2.
\]

\subsection{Fourier Neural Operator}

Unlike DeepONet, the Fourier neural operator works directly with grid-based representations of both the input and the output (\cite{Li2021FNO}). Let $\{x_j\}_{j=1}^{M}\subset D$ be a uniform grid. The input and output fields are represented by
\[
v=\bigl(v(x_1),v(x_2),\dots,v(x_M)\bigr),
\qquad
u=\bigl(u(x_1),u(x_2),\dots,u(x_M)\bigr).
\]
FNO learns a map from the discretized input field to the discretized output field on the same mesh.

The architecture consists of a lifting operator, a sequence of Fourier layers, and a projection operator. The lifting map sends the input to a higher-dimensional feature space,
\[
v_0(x)=\mathcal{P}(v)(x).
\]
Each Fourier layer then applies a spectral convolution together with a local linear transformation:
\begin{equation}\label{eq:fno_layer}
v_{\ell+1}(x)
=
\sigma\!\left(
W_\ell v_\ell(x)
+
\mathcal{F}^{-1}\!\left(
R_\ell(k)\cdot \mathcal{F}(v_\ell)(k)
\right)(x)
+b_\ell
\right),
\qquad \ell=0,\dots,L-1,
\end{equation}
where $\mathcal{F}$ and $\mathcal{F}^{-1}$ are the Fourier and inverse Fourier transforms, $R_\ell(k)$ is a learnable Fourier multiplier, $W_\ell$ is a pointwise linear map, and $\sigma$ is the activation function. In practice, only a finite number of Fourier modes are retained. After $L$ Fourier layers, the final projection gives
\[
\mathcal{G}_{\theta}(v)=\mathcal{Q}(v_L).
\]
Equivalently,
\begin{equation}\label{eq:fno_general}
\mathcal{G}_{\theta}(v)
=
\mathcal{Q}\circ \mathcal{L}_{L}\circ \mathcal{L}_{L-1}\circ \cdots \circ \mathcal{L}_{1}\circ \mathcal{P}(v),
\end{equation}
where $\mathcal{L}_{\ell}$ denotes the $\ell$-th Fourier layer.

\begin{figure}[htbp]
    \centering
    \safeincludegraphics[width=1\linewidth, page=1]{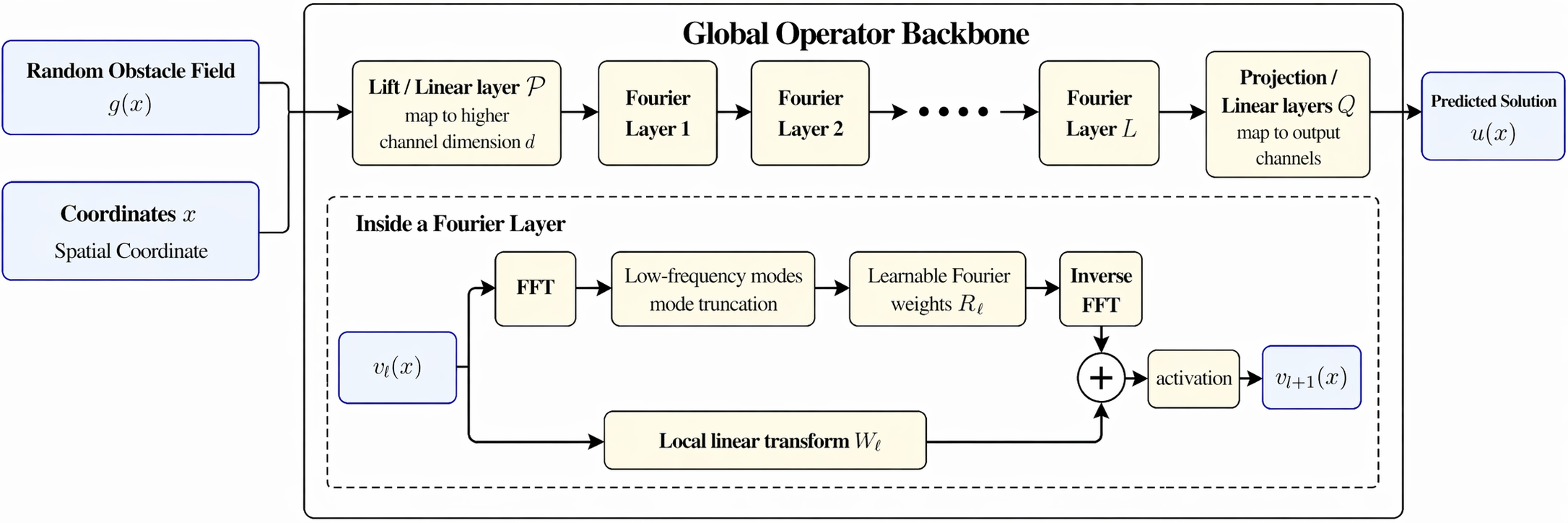}
    \caption{Schematic of the Fourier neural operator architecture. The lifting operator $\mathcal{P}$ maps the input field to a latent feature space, the Fourier layers $\mathcal{L}_\ell$ perform spectral convolution and local transformation, and the projection operator $\mathcal{Q}$ maps the final latent representation to the output field.}
    \label{fig:fno_architecture}
\end{figure}

Given training pairs
\[
\left\{\left(v^{(n)},u^{(n)}\right)\right\}_{n=1}^{N},
\]
the parameters are learned by minimizing
\begin{equation}\label{eq:fno_loss}
\mathcal{L}(\theta)
=
\frac{1}{NM}
\sum_{n=1}^{N}\sum_{j=1}^{M}
\left|
\mathcal{G}_{\theta}\bigl(v^{(n)}\bigr)(x_j)
-
u^{(n)}(x_j)
\right|^2.
\end{equation}
Because the spectral convolution acts globally on the grid, FNO is well suited to problems with long-range interactions and multiscale structure.

\subsection{FNO-LS: FNO with least-squares readout refit}
After end-to-end training, we write the learned FNO parameters as $(\psi^\star,\theta_0)$, where $\psi^\star$ collects all trained parameters except those of the final affine readout, and $\theta_0$ denotes the corresponding readout parameters. In the architecture used here, the projection head ends with a pointwise affine map. Hence, once $\psi^\star$ is frozen, the only remaining trainable component is this final affine readout.

Fix the spatial grid $\{x_j\}_{j=1}^M$. For each input obstacle field $g$, let
\[
z_\star(g)(x_j)\in\mathbb{R}^{d_r},\qquad j=1,\dots,M,
\]
denote the latent feature vector at $x_j$ produced by the frozen backbone $\psi^\star$, that is, by all layers preceding the last affine map. Thus the frozen backbone induces a fixed feature map $g\mapsto z_\star(g)$.

With the backbone frozen, the prediction at grid point $x_j$ can be written as
\begin{equation}\label{eq:last_linear_readout_revised}
\widehat u_\theta(g)(x_j)=w^\top z_\star(g)(x_j)+b,
\end{equation}
where $w\in\mathbb{R}^{d_r}$, $b\in\mathbb{R}$, and $\theta=[w^\top,b]^\top\in\mathbb{R}^p$
with $p=d_r+1$. Equivalently, if we define the augmented feature vector
\[
\widetilde z_\star(g)(x_j):=\bigl[z_\star(g)(x_j)^\top,1\bigr]^\top\in\mathbb{R}^p,
\]
then
\[
\widehat u_\theta(g)(x_j)=\theta^\top \widetilde z_\star(g)(x_j).
\]
Therefore, after freezing $\psi^\star$, Stage~2 searches only over the finite-dimensional affine readout class supported by the learned features.

The procedure is naturally separated into two stages, as illustrated in Fig.~\ref{fig:FNO_2st}:
\begin{itemize}
\item Stage 1: train the full FNO end-to-end, jointly optimizing the nonlinear backbone and the final affine readout;
\item Stage 2: freeze the backbone $\psi^\star$ and refit only the final affine readout by least squares, optionally with ridge regularization.
\end{itemize}

\begin{figure}[htbp]
    \centering
    \safeincludegraphics[width=1\linewidth, page=1]{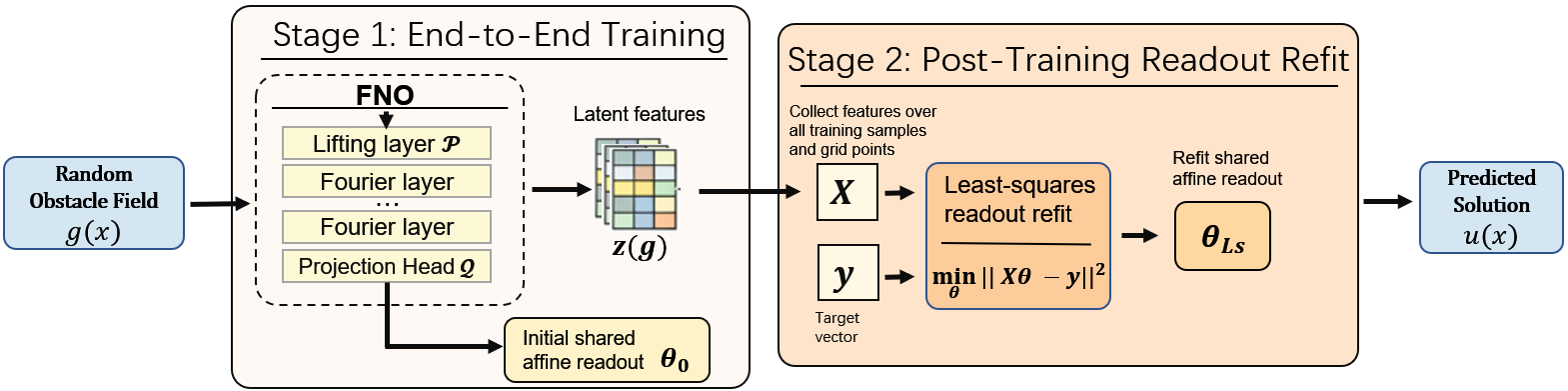}
    \caption{Two-stage FNO training and post-training readout refit. In Stage~1, the full network is trained end-to-end. In Stage~2, the nonlinear backbone is frozen and the final affine readout is refit by least squares.}
    \label{fig:FNO_2st}
\end{figure}

We now write Stage~2 precisely. Let
$\{(g^{(n)},u^{(n)})\}_{n=1}^{N_{\mathrm{train}}}$ be the training set. For each training sample
$n$ and grid point $x_j$, define
\[
\widetilde z_{n,j}:=\widetilde z_\star(g^{(n)})(x_j)\in\mathbb{R}^p,
\qquad
y_{n,j}:=u^{(n)}(x_j).
\]
Stacking all pairs $(n,j)$ into a single row index yields a design matrix
$X\in\mathbb{R}^{N_{\mathrm{row}}\times p}$ and a target vector
$y\in\mathbb{R}^{N_{\mathrm{row}}}$, where
\[
N_{\mathrm{row}}:=N_{\mathrm{train}}M.
\]
The unregularized readout refit is
\begin{equation}\label{eq:ls-objective-revised}
\widehat\theta_{\mathrm{LS}}
\in
\arg\min_{\theta\in\mathbb{R}^p}\|X\theta-y\|_2^2.
\end{equation}
Equivalently, $\widehat\theta_{\mathrm{LS}}$ minimizes the frozen-feature empirical squared loss
\[
\mathcal L_{\mathrm{fr}}(\theta):=
\frac{1}{N_{\mathrm{row}}}\|X\theta-y\|_2^2.
\]
If desired, one may instead solve the ridge-regularized problem
\begin{equation}\label{eq:ridge-objective-revised}
\widehat\theta_\lambda
\in
\arg\min_{\theta\in\mathbb{R}^p}
\Bigl(\|X\theta-y\|_2^2+\lambda\|\theta\|_2^2\Bigr),
\qquad \lambda\ge 0.
\end{equation}
In the numerical experiments below, however, we use the unregularized least-squares readout refit.

The next theorem records the basic empirical optimality property of the Stage~2 readout refit.
\begin{theorem}\label{thm:empirical-monotonicity}
Fix an end-to-end trained FNO with parameters $(\psi^\star,\theta_0)$, and keep the backbone parameters $\psi^\star$ fixed. Let $\widehat\theta_{\mathrm{LS}}$ be any minimizer of
\eqref{eq:ls-objective-revised}. Then
\[
\mathcal L_{\mathrm{fr}}(\widehat\theta_{\mathrm{LS}})
=
\min_{\theta\in\mathbb{R}^p}\mathcal L_{\mathrm{fr}}(\theta)
\le
\mathcal L_{\mathrm{fr}}(\theta_0).
\]
Equivalently,
\begin{equation}\label{eq:train-monotonicity}
\|X\widehat\theta_{\mathrm{LS}}-y\|_2^2
\le
\|X\theta_0-y\|_2^2.
\end{equation}
Therefore, for the same frozen features and the same training set, the least-squares readout refit cannot increase the empirical squared loss over the frozen-feature affine readout class defined by \eqref{eq:last_linear_readout_revised}.
\end{theorem}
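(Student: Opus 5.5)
The argument is elementary, and the plan has three steps. First, the minimum is attained. Second, $\theta_0$ is a feasible point of the same problem. Third, the inequality follows by comparing the objective at the minimizer with its value at $\theta_0$.

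\emph{Step 1: well-posedness of the minimum.} The map $\theta\mapsto\|X\theta-y\|_2^2$ is a convex quadratic on $\mathbb{R}^p$. Its infimum is attained; I would show this in either of two ways.
\begin{itemize}
\item Via the normal equations: any solution of $X^\top X\theta=X^\top y$ is a minimizer. This system is always consistent because $X^\top y\in\operatorname{range}(X^\top)=\operatorname{range}(X^\top X)$.
\item Via orthogonal projection: $X\theta$ ranges over the closed subspace $\operatorname{range}(X)$, so the projection of $y$ onto it gives the minimal value.
\end{itemize}
Hence $\min_{\theta}\mathcal L_{\mathrm{fr}}(\theta)$ exists and is finite. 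If $\widehat\theta_{\mathrm{LS}}$ is any minimizer of \eqref{eq:ls-objective-revised}, then $\mathcal L_{\mathrm{fr}}(\widehat\theta_{\mathrm{LS}})$ equals this minimum, since $\mathcal L_{\mathrm{fr}}$ is just $\|X\theta-y\|_2^2$ scaled by the positive constant $1/N_{\mathrm{row}}$. This gives the first equality.

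\emph{Step 2: $\theta_0$ lies in the search class.} This is the only point needing care, and I take it to be the main (mild) obstacle. The issue is that the end-to-end trained network's prediction at $x_j$ must be exactly $\theta_0^\top\widetilde z_\star(g^{(n)})(x_j)$, i.e.\ the row $X_{(n,j)}\theta_0$.

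This holds because the projection head ends with a pointwise affine map. With the backbone $\psi^\star$ frozen, the features $z_\star$ are precisely the inputs to that last affine layer. Writing the trained readout as $\theta_0=[w_0^\top,b_0]^\top\in\mathbb{R}^p$ reproduces the trained FNO output through \eqref{eq:last_linear_readout_revised}. So $\theta_0\in\mathbb{R}^p$ is admissible, and $\|X\theta_0-y\|_2^2$ is exactly the Stage~1 empirical loss \eqref{eq:fno_loss} on the training set, up to the factor $N_{\mathrm{train}}M=N_{\mathrm{row}}$.

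\emph{Step 3: conclusion.} The minimum over $\mathbb{R}^p$ is bounded by the value at any particular point, in particular at $\theta_0$. Multiplying by $N_{\mathrm{row}}>0$ gives \eqref{eq:train-monotonicity}. The final sentence of the theorem is then a restatement: the refit searches the same affine readout class over the same features, so it cannot do worse than the readout it replaces.
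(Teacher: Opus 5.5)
Your proposal is correct and follows the paper's argument: $\theta_0$ is feasible for the frozen-feature least-squares problem, so the global minimum cannot exceed its objective value. Your Step~1 (the minimum is attained, via the normal equations or orthogonal projection) and Step~2 (checking that $X\theta_0$ reproduces the Stage~1 predictions) make explicit what the paper's two-line proof takes for granted.
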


\begin{proof}
The end-to-end readout $\theta_0$ is feasible for \eqref{eq:ls-objective-revised}, because it belongs to the same frozen-feature affine class. Since $\widehat\theta_{\mathrm{LS}}$ is a global minimizer of that objective, its value cannot exceed the value attained by $\theta_0$.
\end{proof}

Although elementary, Theorem~\ref{thm:empirical-monotonicity} is useful because it isolates exactly what Stage~2 accomplishes: for fixed features, least squares computes a full-data empirical minimizer over the same affine readout family. Thus FNO-LS does not enlarge the nonlinear approximation class; it only re-optimizes the linear coefficients supported by the learned representation.

A geometric interpretation is also useful.

\begin{proposition}\label{prop:projection}
Let
\[
\mathcal S:=\{X\theta:\theta\in\mathbb{R}^p\}\subset\mathbb{R}^{N_{\mathrm{row}}}
\]
be the column space of the frozen-feature design matrix. Then $X\widehat\theta_{\mathrm{LS}}$ is the orthogonal projection of $y$ onto $\mathcal S$.  Equivalently, the residual
\[
r:=y-X\widehat\theta_{\mathrm{LS}}
\]
satisfies
\[
X^\top r=0.
\]
If $X$ has full column rank, then
\[
\widehat\theta_{\mathrm{LS}}=(X^\top X)^{-1}X^\top y.
\]
\end{proposition}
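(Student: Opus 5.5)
The plan is to derive the normal equations from the first-order optimality condition for the convex quadratic objective in \eqref{eq:ls-objective-revised}. Then I will read off the projection property from these equations. Write $J(\theta):=\|X\theta-y\|_2^2=\theta^\top X^\top X\theta-2\theta^\top X^\top y+y^\top y$. This is a convex, continuously differentiable function on $\mathbb{R}^p$, with gradient $\nabla J(\theta)=2X^\top(X\theta-y)$. Since $J$ is convex, a point is a global minimizer exactly when this gradient vanishes. Every minimizer $\widehat\theta_{\mathrm{LS}}$ therefore satisfies $X^\top X\widehat\theta_{\mathrm{LS}}=X^\top y$. With $r=y-X\widehat\theta_{\mathrm{LS}}$, this is the same as $X^\top r=0$.

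Next comes the geometric reformulation. The condition $X^\top r=0$ says that $\langle X\theta, r\rangle=\theta^\top X^\top r=0$ for every $\theta\in\mathbb{R}^p$. In other words, $r\perp\mathcal S$. The decomposition $y=X\widehat\theta_{\mathrm{LS}}+r$ has $X\widehat\theta_{\mathrm{LS}}\in\mathcal S$ and $r\in\mathcal S^\perp$. Because $\mathbb{R}^{N_{\mathrm{row}}}=\mathcal S\oplus\mathcal S^\perp$, such a decomposition is unique, so $X\widehat\theta_{\mathrm{LS}}$ is the orthogonal projection of $y$ onto $\mathcal S$. Uniqueness also shows that the fitted vector does not depend on which minimizer is chosen, even when $\widehat\theta_{\mathrm{LS}}$ itself is not unique.

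For the converse direction, a short Pythagorean argument confirms that the projection indeed attains the minimum. For any $\theta$, the difference $X\theta-X\widehat\theta_{\mathrm{LS}}$ lies in $\mathcal S$ and is therefore orthogonal to $r$. Hence $\|X\theta-y\|_2^2=\|X(\theta-\widehat\theta_{\mathrm{LS}})\|_2^2+\|r\|_2^2\ge\|r\|_2^2$. This also guarantees that a minimizer exists: take the orthogonal projection of $y$ onto the finite-dimensional subspace $\mathcal S$ and choose any preimage under $X$.

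Finally, suppose $X$ has full column rank. If $X^\top X v=0$, then $\|Xv\|_2^2=v^\top X^\top Xv=0$, so $Xv=0$ and hence $v=0$. Thus $X^\top X$ is invertible, and the normal equations give $\widehat\theta_{\mathrm{LS}}=(X^\top X)^{-1}X^\top y$. I do not expect a genuine obstacle in any of these steps. The only point that needs care is the rank-deficient case, where the statement must be read as "for any minimizer": there the fitted vector $X\widehat\theta_{\mathrm{LS}}$ is unique even though $\widehat\theta_{\mathrm{LS}}$ is not, and the uniqueness of the orthogonal decomposition handles this.
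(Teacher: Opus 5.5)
Your proof is correct and follows the same route as the paper, which simply invokes the standard normal-equation characterization of least squares. You supply the details the paper leaves implicit (first-order optimality of the convex quadratic, uniqueness of the decomposition $\mathbb{R}^{N_{\mathrm{row}}}=\mathcal S\oplus\mathcal S^\perp$, and invertibility of $X^\top X$ under full column rank), and you correctly note that in the rank-deficient case the claim holds for every minimizer, with $X\widehat\theta_{\mathrm{LS}}$ unique even when $\widehat\theta_{\mathrm{LS}}$ is not.
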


\begin{proof}
This is the standard normal-equation characterization of least squares.
\end{proof}

It is essential to distinguish empirical optimality from generalization. Theorem~\ref{thm:empirical-monotonicity} concerns only the frozen-feature empirical squared loss on the training set. By itself, it yields no monotonicity statement for the test error or for the population risk on unseen obstacle realizations. On new samples, the refitted readout may improve accuracy, have little effect, or even perform worse, depending on the quality of the learned features, the amount of training data, and the stability with which the frozen-feature regression transfers beyond the training set.

The practical motivation for the Stage~2 readout refit is therefore more limited and more concrete. Stage~1 and Stage~2 solve different optimization problems: Stage~1 uses joint stochastic optimization of the backbone and the readout, whereas Stage~2 solves the induced full-data linear regression problem after the backbone has been frozen.

Because the frozen-feature least-squares system in Stage~2 can be very large, it is natural to solve it in a blockwise manner in order to reduce memory usage. We next summarize the additional linear-algebra cost of Stage~2, excluding the one-time forward sweep used to evaluate the frozen features on the training set.

\begin{remark}
Assume that the design matrix $X$ is processed in $K$ chunks
\[
X_m \in \mathbb{R}^{n_m \times p}, \qquad \sum_{m=1}^K n_m = N_{\mathrm{row}},
\]
and let $y_m \in \mathbb{R}^{n_m}$ denote the corresponding block of the target vector. Then Stage~2 can be implemented without explicitly storing the full matrix $X$.

\begin{enumerate}
    \item For the blockwise normal-equation approach, it suffices to accumulate
    \[
    G = \sum_{m=1}^K X_m^\top X_m \in \mathbb{R}^{p \times p},
    \qquad
    c = \sum_{m=1}^K X_m^\top y_m \in \mathbb{R}^{p}.
    \]
    This requires only $O(p^2)$ memory and
    \[
    O(N_{\mathrm{row}}p^2 + p^3)
    \]
    arithmetic, where the $O(p^3)$ term comes from the final dense solve.

    \item For a tall-skinny QR strategy, one computes local factorizations
    \[
    X_m = Q_m R_m,
    \]
    and forms a compressed least-squares problem from the stacked factors $R_m$ and vectors $Q_m^\top y_m$. This avoids forming $X^\top X$ explicitly and has the same leading-order arithmetic cost
    \[
    O(N_{\mathrm{row}}p^2)
    \]
    with respect to the number of rows.
\end{enumerate}
Hence, when $p$ is small, the additional cost of the readout refit is determined by a low-dimensional linear-algebra problem built from the frozen features, rather than by repeated backpropagation through the full FNO backbone. In particular, the full design matrix \(X\) need not be stored explicitly: one may process the training data in chunks and either accumulate the normal-equation summaries or maintain a compressed tall-skinny QR representation.
\end{remark}

From a numerical-linear-algebra viewpoint, QR is preferable when conditioning is a concern, whereas normal equations are more memory-efficient. In either case, once the frozen features have been generated, Stage~2 is a small deterministic readout-refitting problem in $p=d_r+1$ unknowns. It is most useful when the frozen backbone is already informative but the final affine readout is not yet fully optimized for those frozen features; its effect is naturally limited when end-to-end training has already brought the readout close to the frozen-feature empirical minimizer.

The readout refit also serves as a diagnostic for the trained FNO. A large improvement after refitting indicates that the frozen features are already informative but the final affine readout was not fully optimized. A marginal improvement indicates either that the end-to-end readout is already close to the frozen-feature least-squares optimum, or that the remaining error is mainly due to limitations of the learned nonlinear representation or the finite training set.

\section{Numerical experiments}\label{sec5}
\setcounter{equation}0
The experiments in this section are designed to assess not only overall field accuracy, but also whether the learned surrogates capture the geometric and inequality structure that is specific to obstacle problems. Throughout, we consider a fixed elliptic obstacle problem (\cite{Bierig2015}) in which only the obstacle field~\eqref{eq:obstacle_function} varies from sample to sample. Specifically, we take
\[
D=(-1,1)^2,
\qquad
a=1,
\qquad
f=-5,
\qquad
u_D=\frac12 \ \text{on }\partial D.
\]
For a given realization $g$ of the random obstacle field~\eqref{eq:obstacle_function}, define
\[
K_g:=\{v\in H^1(D)\,:\, v|_{\partial D}=u_D,\ \ v\ge g \ \text{a.e. in } D\}.
\]
The reference solution $u=\mathcal{G}(g)$ is the unique function in $K_g$ such that
\begin{equation}\label{Vi}
\int_D \nabla u\cdot \nabla(v-u)\,\mathrm{d}x
\ge
\int_D f(v-u)\,\mathrm{d}x
\qquad \forall v\in K_g.
\end{equation}

Although the abstract formulation in Section~\ref{sec2} is written for homogeneous Dirichlet data, the numerical experiments use the nonhomogeneous boundary condition \(u_D=1/2\) on \(\partial D\). Since \(u_D\) is constant, the shift \(w=u-u_D\) reduces this case to the homogeneous formulation with shifted obstacle \(g-u_D\).

For the finite-band self-affine random obstacle field~\eqref{eq:obstacle_function} introduced in Section~\ref{sec2}, the parameter \(\alpha\) serves as a global amplitude scale for the obstacle field. In the experiments below, we consider two benchmark regimes: a lower-amplitude case with \(\alpha=\pi/25\) and a higher-amplitude case with \(\alpha=\pi/5\). Since the modal envelope depends linearly on \(\alpha\), the lower-amplitude benchmark has an amplitude envelope that is smaller by a factor of \(5\), making the boundary compatibility condition substantially easier to satisfy. A brief numerical check of the generated samples also confirms that the obstacle values on the boundary remain below \(0.5\) in this regime.

In the higher-amplitude case \(\alpha=\pi/5\), a simple analytic estimate obtained by summing modal amplitudes is too conservative to yield a uniform realization-independent bound of the form \(g<1/2\) on \(\partial D\). For this reason, we do not claim such a sharp analytical bound in the higher-amplitude regime. Nevertheless, for the actual dataset used in the experiments, the largest observed obstacle value over all samples and grid points is \(0.289\), which is strictly smaller than \(0.5\). Hence every retained sample satisfies the discrete compatibility condition \(g(x_j)<0.5\) at all boundary grid points.

\begin{figure}[htbp]
    \centering
    \safeincludegraphics[width=0.8\linewidth]{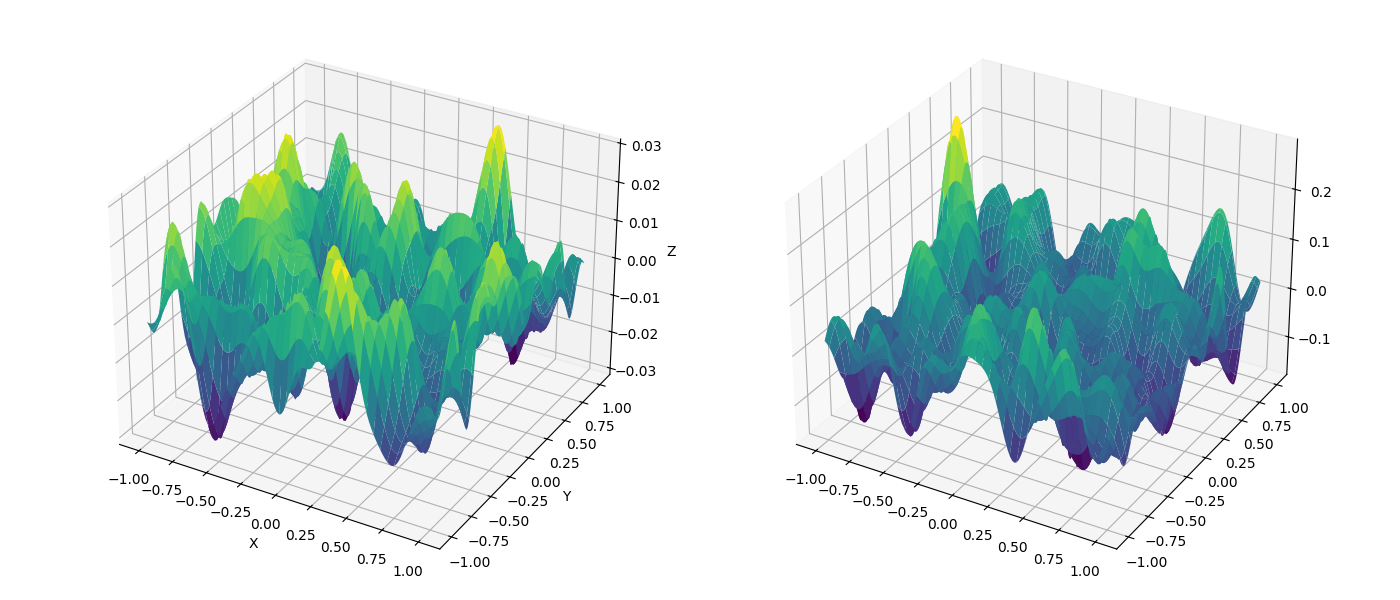}
    \caption{Representative realizations of the random obstacle field for two amplitude levels: $\alpha=\pi/25$ (left) and $\alpha=\pi/5$ (right).}
    \label{obstacle_compare}
\end{figure}

\subsection{Experimental setup}
The input is the random obstacle field $g$, and the output is the corresponding solution sampled on a uniform $128\times 128$ grid. Reference solutions are generated offline by a finite-difference active-set (\cite{Karkkainen2003,Hueber2005}) solver on a uniform \(128\times128\) grid over \(D=[-1,1]^2\), with mesh size \(h=2/(128-1)\). The active-set iteration is terminated when the active set is unchanged, or after \(1000\) iterations, and the neural operators are trained in PyTorch on the resulting input--output pairs. To improve reproducibility, all reported experiments are run with a fixed random seed. All computations are carried out on an NVIDIA RTX A6000 GPU.

We compare four model families: vanilla DeepONet, POD-DeepONet, two-stage DeepONet, and FNO. For vanilla DeepONet, the vectorized obstacle field on the $128\times 128$ grid is used as the branch input, while the spatial coordinate $x=(x_1,x_2)$ is used as the trunk input. In the reported vanilla DeepONet baseline, both the branch and trunk networks have four hidden layers of width $1368$; the branch input dimension is $16384$, corresponding to the vectorized $128\times128$ grid, and the trunk input dimension is $2$. This yields a parameter count that is approximately matched to that of the standard FNO configuration, so that the comparison is not driven mainly by model size. 

For POD-DeepONet, the trunk basis is replaced by a POD basis extracted only from the training solutions, and the branch network predicts only the modal coefficients (\cite{Lu2022Comparison}). For two-stage DeepONet, we follow the latent-code strategy of~\cite{Lee2024}. In the first stage, a coordinate-dependent decoder basis and sample-dependent latent coefficients are trained to reconstruct the reference solutions on the training set. In the second stage, a branch network is trained to map the input obstacle field to the learned latent coefficients. Both stages are trained for \(3000\) epochs. At test time, the predicted coefficients are combined with the learned decoder basis to produce the solution field.

For FNO-based models, the input consists of one obstacle channel together with two coordinate channels. Unless otherwise stated, the standard FNO configuration retains \(16\) Fourier modes in each spatial direction, uses channel width \(128\), padding size \(8\), and four Fourier layers. The resulting pointwise feature representation has dimension \(128\); after augmenting it with a bias term, the final affine readout has dimension \(p=129\). This is the readout used both in the standard FNO prediction head and in the least-squares refitting step of FNO-LS.

In all experiments, no input or output normalization is used. The batch size denotes the number of full input--output samples per optimization step, not the number of spatial grid points. We use batch size \(16\) for FNO-type models and batch size \(1024\) for the other baselines. All models are trained with AdamW with initial learning rate \(10^{-3}\), weight decay \(10^{-4}\). For FNO-type models, the learning rate is reduced by a factor of \(0.5\) every \(30\) epochs, while for the DeepONet-type baselines, it is reduced by a factor of \(0.5\) every \(200\) epochs. GELU activations are used throughout all neural network architectures.

In the tables below, rows marked ``LS'' report the FNO model after the Stage~2 post-training least-squares readout refit. The reported total time includes both Stage~1 end-to-end FNO training and Stage~2 readout refitting, while the additional Stage~2 cost is reported separately in the text. The parameter count shown in the tables is the total parameter count of the underlying FNO architecture; only the final 129-parameter affine readout is refit in Stage~2.

Let $\{(g^{(i)},u^{(i)})\}_{i=1}^{N_{\mathrm{test}}}$ be the test set, and let $\widehat u^{(i)}$ denote the prediction for the $i$th sample. We evaluate the models by three complementary criteria: a field error, a contact-set metric, and an obstacle-violation metric. This choice reflects the mathematical structure of obstacle problems. A good surrogate should not only approximate the solution field in bulk, but should also identify the contact region reliably and avoid violating the inequality constraint.

All \(L^2(D)\) norms reported below are evaluated as discrete quadrature norms on the uniform \(128\times128\) grid. First, we measure the relative \(L^2(D)\)
error by
\begin{equation}
    e_{L^2}^{(i)}
:=
\frac{\|u^{(i)}-\widehat u^{(i)}\|_{L^2(D)}}{\|u^{(i)}\|_{L^2(D)}}.
\end{equation}
This is the primary field error metric and quantifies the overall accuracy of the predicted solution over the whole domain.

Second, since a central structural feature of the obstacle problem is the contact set
\[
\mathcal C^{(i)}:=\{x\in D:\ u^{(i)}(x)=g^{(i)}(x)\},
\]
we also evaluate how well the predicted contact region is recovered. On the discrete grid, exact equality is not meaningful numerically, so we introduce a tolerance-based contact-set surrogate. For each test sample, we define
\begin{equation}
    \tau_i
:=
\max\Bigl\{\tau_{\mathrm{abs}},\,
\eta \max\bigl(
\|u^{(i)}\|_{L^\infty(D)},\,
\|g^{(i)}\|_{L^\infty(D)}
\bigr)\Bigr\},
\end{equation}
where $\eta>0$ is a prescribed relative tolerance and $\tau_{\mathrm{abs}}>0$ is a small absolute tolerance. In this paper, we use $\eta = 10^{-2}$ and $\tau_{\mathrm{abs}} = 10^{-5}$. Using $\tau_i$, we define the discrete tolerance-based contact sets
\begin{equation}
    \mathcal C_{\tau_i}^{\mathrm{ref}}
:=
\{x\in D:\ |u^{(i)}(x)-g^{(i)}(x)|\le \tau_i\},
\qquad
\mathcal C_{\tau_i}^{\mathrm{pred}}
:=
\{x\in D:\ |\widehat u^{(i)}(x)-g^{(i)}(x)|\le \tau_i\}.
\end{equation}

We then measure their overlap by the intersection-over-union (IoU), with the convention
\[
\mathrm{IoU}^{(i)}
:=
\begin{cases}
1, 
& \text{if }  \mathcal C_{\tau_i}^{\mathrm{ref}}\cup \mathcal C_{\tau_i}^{\mathrm{pred}} = \varnothing, \\[0.8em]
\dfrac{
|\mathcal C_{\tau_i}^{\mathrm{ref}}\cap \mathcal C_{\tau_i}^{\mathrm{pred}}|
}{
|\mathcal C_{\tau_i}^{\mathrm{ref}}\cup \mathcal C_{\tau_i}^{\mathrm{pred}}|
},
& \text{otherwise}.
\end{cases}
\]

This metric evaluates geometric agreement of the predicted and reference contact regions. It adds information beyond the field error: because the contact-set boundary is a free boundary, a visibly misplaced contact region can coexist with a relatively small global $L^2$ error.

Third, because the obstacle problem imposes the unilateral constraint $u^{(i)}\ge g^{(i)}$, we also measure how strongly the prediction violates the obstacle. Define the pointwise violation by
\[
v^{(i)}(x):=(g^{(i)}(x)-\widehat u^{(i)}(x))_+,
\qquad (a)_+:=\max\{a,0\},
\]
and the relative obstacle-violation error by
\begin{equation}
e_{\mathrm{viol}}^{(i)}
:=
\frac{\| (g^{(i)}-\widehat u^{(i)})_+ \|_{L^2(D)}}{\|u^{(i)}\|_{L^2(D)}}.
\end{equation}
This quantity is zero if the prediction stays above the obstacle everywhere, and it becomes larger when the predicted solution penetrates the obstacle more severely. It is therefore complementary to the $L^2$ error and directly probes whether the learned surrogate respects the defining inequality structure of the problem.

For the test set, we report the average and worst-case relative $L^2$ errors,
\[
E_{L^2}^{\mathrm{avg}}
:=
\frac{1}{N_{\mathrm{test}}}\sum_{i=1}^{N_{\mathrm{test}}} e_{L^2}^{(i)},
\qquad
E_{L^2}^{\max}
:=
\max_{1\le i\le N_{\mathrm{test}}} e_{L^2}^{(i)},
\]
and, when contact-set statistics are reported, we also record the average IoU and the average relative obstacle-violation error over the test set,
\[
E_{\mathrm{IoU}}^{\mathrm{avg}}
:=
\frac{1}{N_{\mathrm{test}}}\sum_{i=1}^{N_{\mathrm{test}}}\mathrm{IoU}^{(i)},
\qquad
E_{\mathrm{viol}}^{\mathrm{avg}}
:=
\frac{1}{N_{\mathrm{test}}}\sum_{i=1}^{N_{\mathrm{test}}} e_{\mathrm{viol}}^{(i)}.
\]

\subsection{Example 1: lower-amplitude obstacles}

We first consider the lower-amplitude benchmark with $\alpha=\pi/25$. The training set contains $10{,}000$ obstacle realizations together with their reference solutions, and the test set contains $N_{\mathrm{test}}=8{,}000$ additional samples. All fields are sampled on a $128\times 128$ uniform grid.

\begin{table}[htbp]
\centering
\caption{Comparison of neural-operator models on the lower-amplitude benchmark (\(\alpha=\pi/25\)). The parameter counts reported in the table are total model parameter counts. For rows marked ``LS'', Stage~2 refits only the final 129-parameter affine readout, and the reported time is the total time of Stage~1 plus Stage~2.}
\label{table_example1}
\scriptsize  
\begin{tabular}{lcccccc}
\toprule
Method & Total params. & Total time (s) & $E_{L^2}^{\mathrm{avg}} $ &  $E_{L^2}^{\max}$   & $E_{\mathrm{IoU}}^{\mathrm{avg}}$ & $E_{\mathrm{viol}}^{\mathrm{avg}}$ \\
\midrule
Vanilla DeepONet         & 33,655,537 & 2,726.43  & 4.95E-03 & 2.54E-02 & 0.910454 & 3.32E-03 \\
POD-DeepONet (128 modes) & 30,081,080 & \textbf{1,128.29}  & 7.94E-04 & 9.51E-03 & 0.994581 & 3.34E-04 \\
POD-DeepONet (256 modes) & 30,256,312 & 1,135.98  & 7.99E-04 & 9.52E-03 & 0.994546 & 3.39E-04 \\
Two-stage DeepONet      & 47,346,905 & 4,573.01  & 4.17E-03 & 2.41E-02 & 0.932494 & 2.72E-03 \\
FNO-100                 & 33,637,633 & 7,882.84  & 7.70E-04 & \textbf{4.23E-03} & 0.996686 & 2.67E-04 \\
FNO-100-LS (NE)       & 33,637,633 & 7,924.98  & \textbf{6.37E-04} & 4.24E-03 & \textbf{0.996824} & \textbf{2.44E-04} \\
FNO-100-LS (QR)       & 33,637,633 & 7,977.99  & \textbf{6.37E-04} & 4.24E-03 & \textbf{0.996824} & \textbf{2.44E-04} \\
\bottomrule
\end{tabular}
\end{table}

Here and below, ``FNO-\(k\)'' denotes the same FNO architecture trained for \(k\) epochs; only the training duration differs across these rows. Table~\ref{table_example1} summarizes the model ranking on this benchmark. For the ``-LS'' rows, the reported time includes both Stage~1 end-to-end FNO training and Stage~2 readout refitting. The additional Stage~2 cost is \(42.14\)~s for the normal-equation implementation and \(95.15\)~s for the QR implementation. Among the methods tested on this lower-amplitude benchmark, the FNO variants perform best overall. In particular, FNO-100 gives the smallest value of \(E_{L^2}^{\max}\),  while FNO-100-LS attains the best average relative \(L^2\) error, average contact-set IoU, and average obstacle-violation error.

At this lower amplitude level, the solution family is still sufficiently compressible for a POD-based representation to remain effective. The obstacle is nontrivial and the contact geometry is already moving from sample to sample, but the induced free-boundary complexity is still limited enough that a reduced basis extracted from training solutions remains well aligned with the dominant solution manifold. Overall, FNO and POD-DeepONet are both highly accurate in this regime; FNO-LS achieves the best average field error and structural metrics, while POD-DeepONet remains faster in the reported implementation.

\begin{table}[htbp]
\centering
\caption{Effect of the post-training least-squares readout refit for FNO at different training stages on the lower-amplitude benchmark \((\alpha=\pi/25)\). The parameter counts reported in the table are total model parameter counts. For rows marked ``LS'', only the final 129-parameter affine readout is refit in Stage~2.}
\label{table_example1_ls}
\scriptsize  
\begin{tabular}{lcccccc}
\toprule
Method & Total params. & Total time (s) & $E_{L^2}^{\mathrm{avg}} $ &  $E_{L^2}^{\max}$   & $E_{\mathrm{IoU}}^{\mathrm{avg}}$ & $E_{\mathrm{viol}}^{\mathrm{avg}}$ \\
\midrule
FNO-200                 & 33,637,633 & 15,773.42 & 5.67E-04 & 3.91E-03 & \textbf{0.997231} & 2.21E-04 \\
FNO-200-LS (NE)       & 33,637,633 & 15,815.53 & \textbf{5.47E-04} & \textbf{3.90E-03} & 0.997157 & \textbf{2.14E-04} \\
FNO-200-LS (QR)       & 33,637,633 & 15,868.74 & \textbf{5.47E-04} & \textbf{3.90E-03} & 0.997157 & \textbf{2.14E-04} \\
FNO-100                 & 33,637,633 & 7,882.84  & 7.70E-04 & 4.23E-03 & 0.996686 & 2.67E-04 \\
FNO-100-LS (NE)       & 33,637,633 & 7,924.98  & 6.37E-04 & 4.24E-03 & 0.996824 & 2.44E-04 \\
FNO-100-LS (QR)       & 33,637,633 & 7,977.99  & 6.37E-04 & 4.24E-03 & 0.996824 & 2.44E-04 \\
FNO-50                  & 33,637,633 & 3,946.28  & 1.55E-03 & 6.75E-03 & 0.993331 & 5.91E-04 \\
FNO-50-LS (NE)        & 33,637,633 & 3,984.13  & 1.13E-03 & 6.77E-03 & 0.994936 & 3.85E-04 \\
FNO-50-LS (QR)        & 33,637,633 & 4,037.44  & 1.13E-03 & 6.77E-03 & 0.994936 & 3.85E-04 \\
FNO-20                  & 33,637,633 & 1,581.09  & 2.59E-03 & 1.14E-02 & 0.990578 & 8.46E-04 \\
FNO-20-LS (NE)        & 33,637,633 & 1,623.26  & 2.08E-03 & 1.07E-02 & 0.991869 & 5.37E-04 \\
FNO-20-LS (QR)        & 33,637,633 & 1,676.49  & 2.08E-03 & 1.07E-02 & 0.991869 & 5.37E-04 \\
FNO-10                  & 33,637,633 & \textbf{793.31}    & 5.27E-03 & 1.39E-02 & 0.981050 & 1.02E-03 \\
FNO-10-LS (NE)        & 33,637,633 & 835.44    & 2.39E-03 & 1.37E-02 & 0.989119 & 6.50E-04 \\
FNO-10-LS (QR)        & 33,637,633 & 888.63    & 2.39E-03 & 1.37E-02 & 0.989119 & 6.50E-04 \\
\bottomrule
\end{tabular}
\end{table}

Table~\ref{table_example1_ls} addresses a different question, namely the role of the post-training least-squares readout refit for FNO at different training stages. A consistent pattern emerges on this benchmark. The readout refit provides its largest gains for partially trained FNO backbones: the average error is reduced substantially for FNO-10, FNO-20, and FNO-50 after refit, and the same trend is visible in the contact-set IoU and obstacle-violation metrics. By contrast, once end-to-end training is already close to convergence, the additional improvement becomes small. This matches the interpretation developed in Section~3. The second stage does not change the learned nonlinear representation; it only recomputes the least-squares final affine readout for the frozen features.
These results suggest that the LS refit mainly reduces readout optimization error rather than representation error. Its effect is largest in the early or intermediate training regime, where the FNO backbone has already learned useful features but the final affine readout is not yet fully aligned with them. Near convergence, the readout obtained by end-to-end training is already close to the frozen-feature least-squares solution, and the remaining error is primarily due to the learned representation itself or to the finite training set.

The gains are also more consistent for average error than for worst-case error. This is natural, since the least-squares readout refit minimizes a global squared-error objective over all training rows and is therefore designed to improve the average fit of the final affine readout, not to target the single worst test sample directly. Accordingly, the data support average-case improvement, especially for partially trained backbones, but they do not suggest that the readout refit resolves worst-case behavior in a systematic sense. The NE and QR implementations yield identical predictive metrics up to the displayed precision, as expected since they solve the same frozen-feature regression problem by different numerical linear-algebra routes.

\begin{figure}[htbp]
    \centering
    \safeincludegraphics[width=0.8\linewidth]{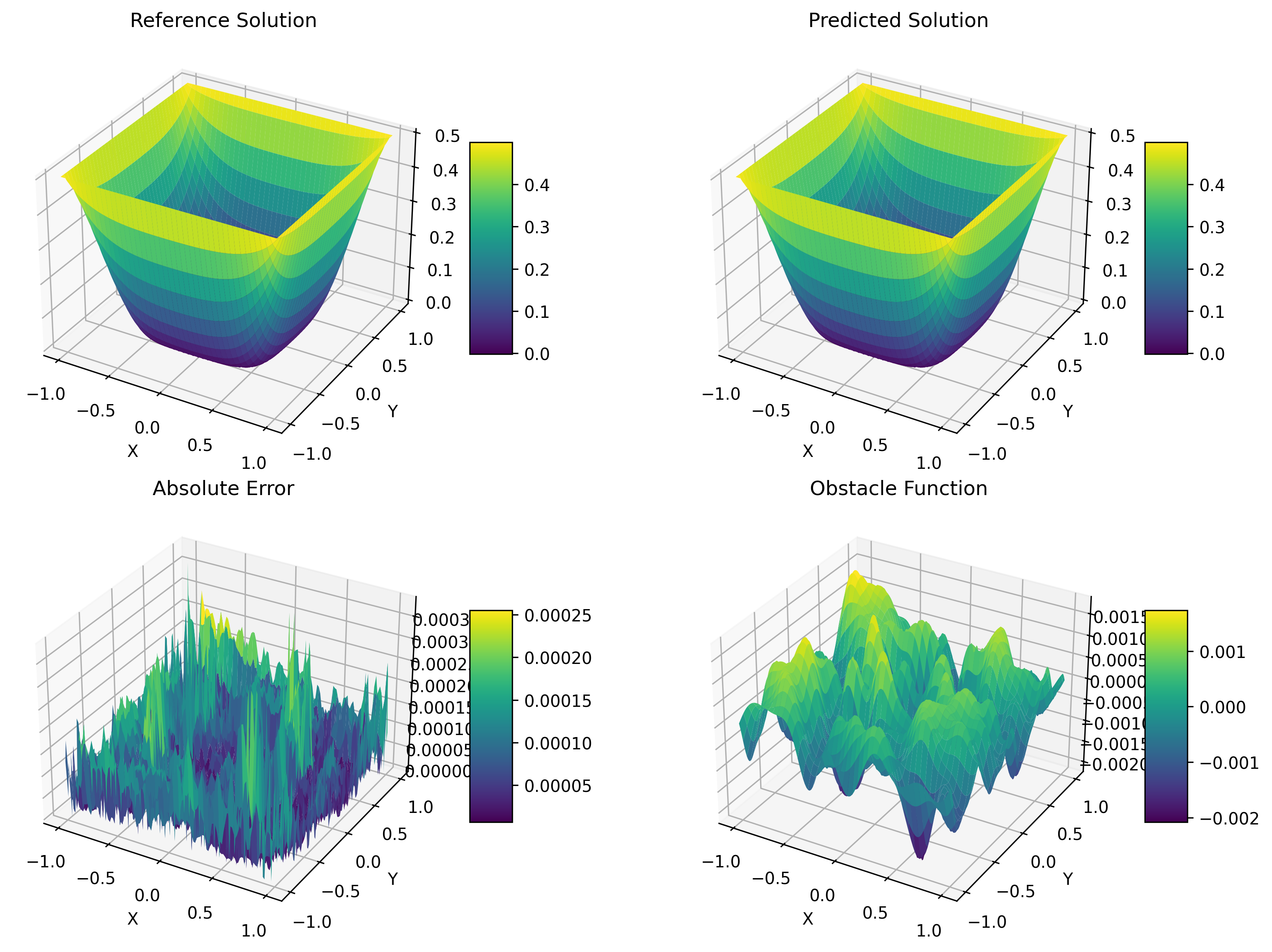}
    \caption{Best-case FNO-100 prediction on the test set in Example~1, selected by the smallest sample-wise relative $L^2$ error. The four panels show the reference solution, the predicted solution, the absolute error, and the obstacle field. For this sample, the relative $L^2$ error is $0.000380$ and the relative obstacle-violation error is $0.000135$.}
    \label{exm1_best}
\end{figure}

\begin{figure}[htbp]
    \centering
    \safeincludegraphics[width=0.8\linewidth]{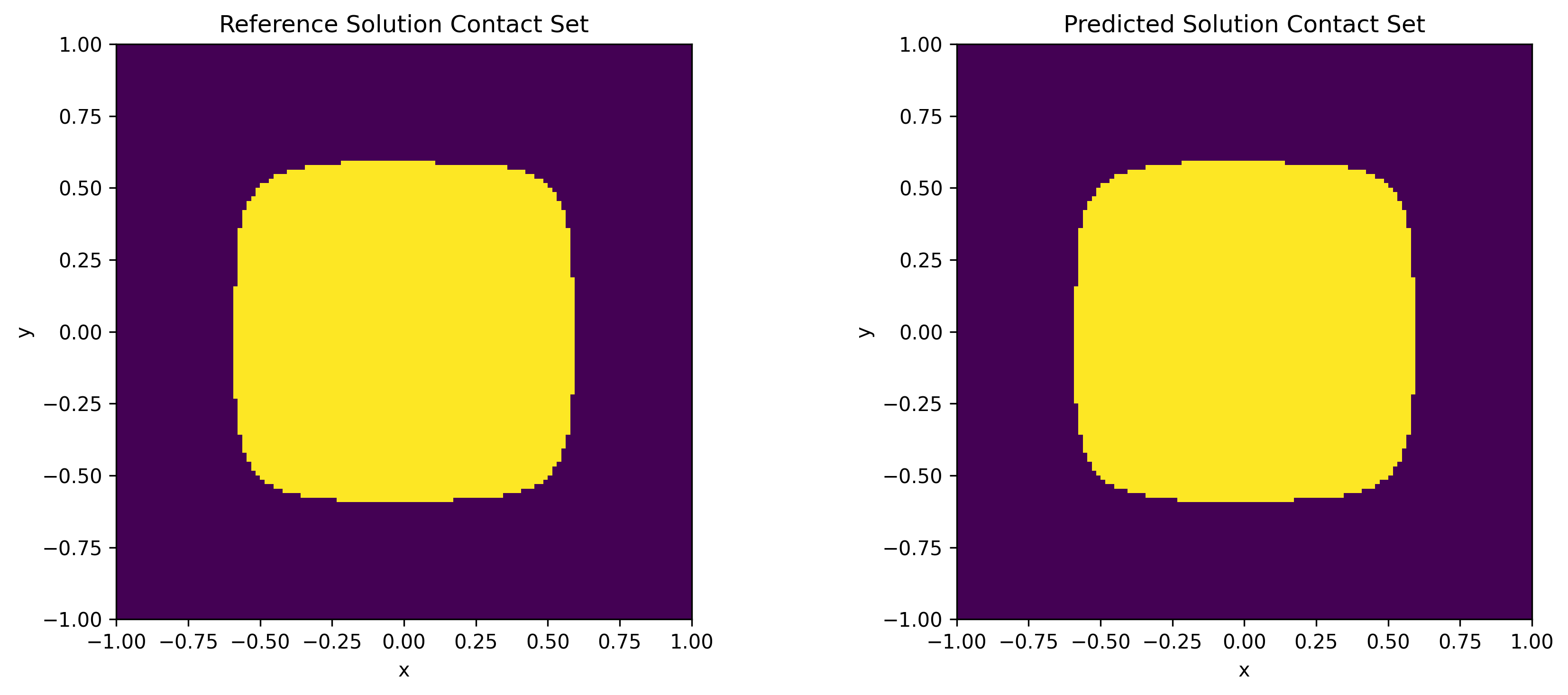}
    \caption{Comparison of the reference and predicted contact sets for the  best-case sample shown in Figure~\ref{exm1_best}. The reference contact set is shown on the left and the predicted contact set on the right; contact regions are highlighted in yellow. The contact-set IoU is $0.998497$.}
    \label{exm1_best_contact}
\end{figure}

\begin{figure}[htbp]
    \centering
    \safeincludegraphics[width=0.8\linewidth]{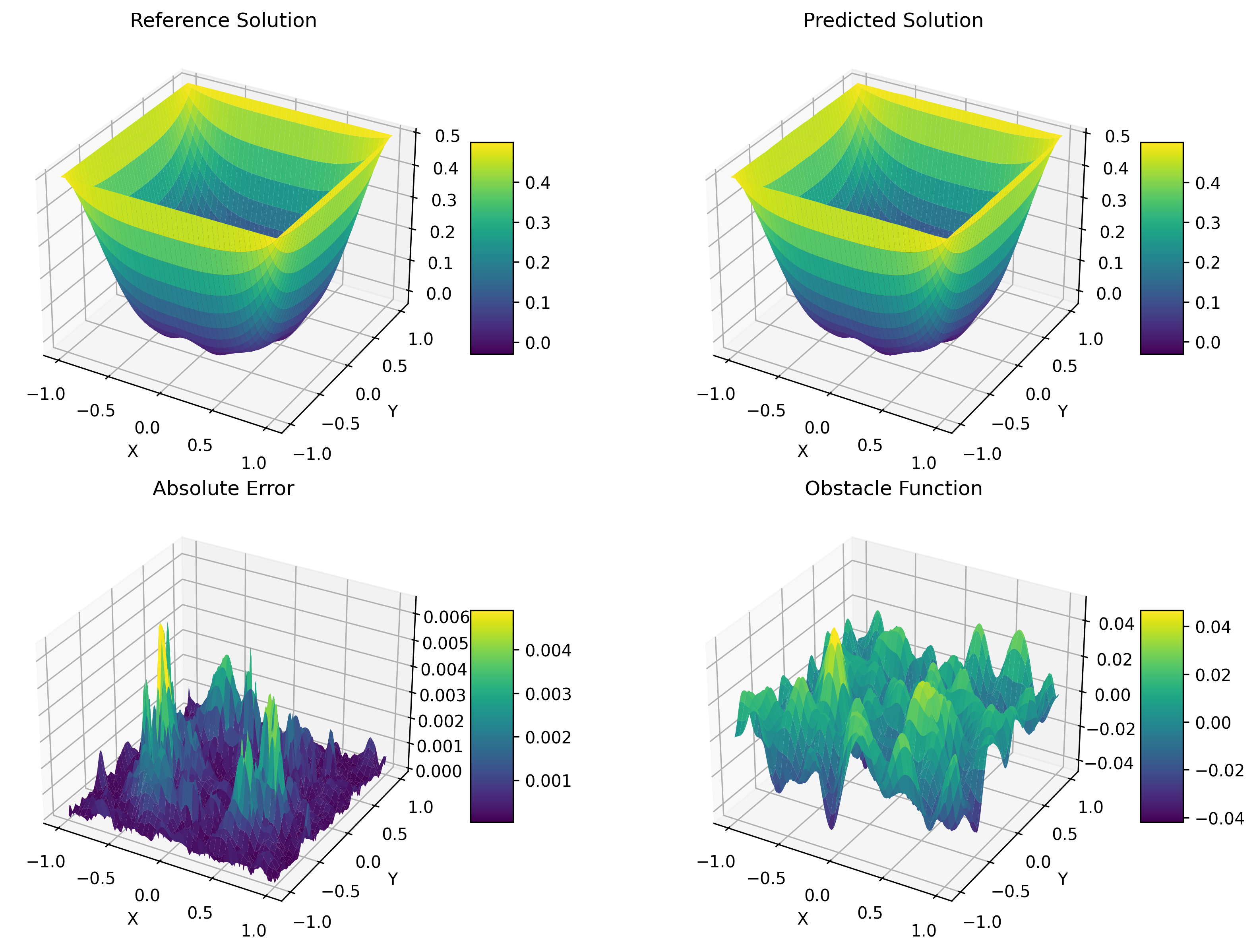}
    \caption{Worst-case FNO-100 prediction on the test set in Example 1, selected by the largest sample-wise relative $L^2$ error. The four panels show the reference solution, the predicted solution, the absolute error, and the obstacle field. For this sample, the relative $L^2$ error is $0.004233$ and the relative obstacle-violation error is $0.001125$.}
    \label{exm1_worst}
\end{figure}

\begin{figure}[htbp]
    \centering
    \safeincludegraphics[width=0.8\linewidth]{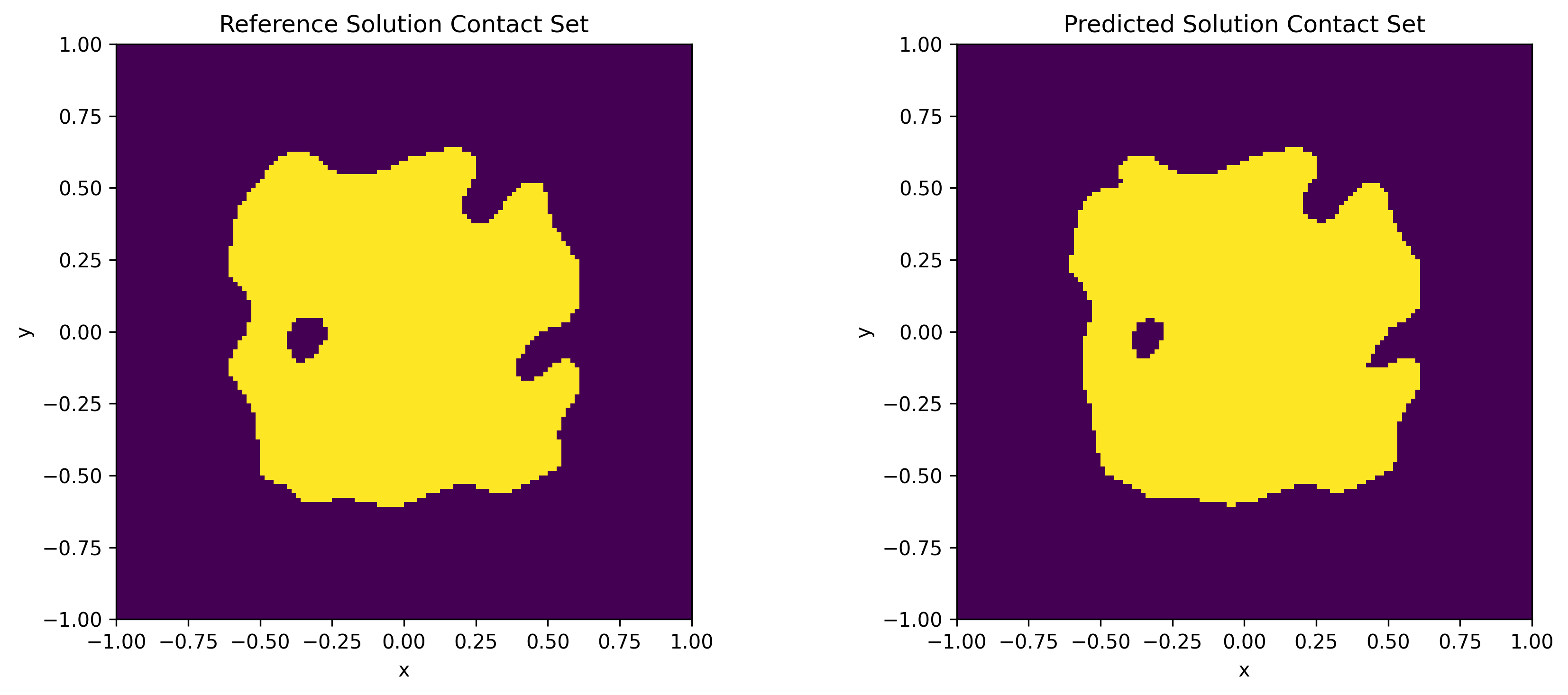}
    \caption{Comparison of the reference and predicted contact sets for the  worst-case sample shown in Figure~\ref{exm1_worst}. The reference contact set is shown on the left and the predicted contact set on the right; contact regions are highlighted in yellow. The contact-set IoU is $0.958628$.}
    \label{exm1_contact}
\end{figure}

Figures~\ref{exm1_worst}--\ref{exm1_contact} complement the aggregate metrics by showing where the remaining errors occur. Even on the worst-case sample in this regime, the predicted solution captures the global shape of the reference solution well. The dominant error is localized near regions where the contact set changes geometry, indicating a local free-boundary misalignment rather than a large-scale distortion of the solution field.

\subsection{Example 2: higher-amplitude obstacles}

We next increase the obstacle amplitude to \(\alpha=\pi/5\), which yields a higher-amplitude benchmark with more pronounced oscillatory features and a more challenging input field. The training and test sets have the same sizes and resolution as in Example 1.

\begin{table}[htbp]
\centering
\caption{Comparison of neural-operator models on the higher-amplitude benchmark \((\alpha=\pi/5)\). The parameter counts reported in the table are total model parameter counts. For rows marked ``LS'', Stage~2 refits only the final 129-parameter affine readout, and the reported time is the total time of Stage~1 plus Stage~2.}
\label{table_example2}
\scriptsize  
\begin{tabular}{lcccccc}
\toprule
Method & Total params. & Total time (s) & $E_{L^2}^{\mathrm{avg}} $ &  $E_{L^2}^{\max}$   & $E_{\mathrm{IoU}}^{\mathrm{avg}}$ & $E_{\mathrm{viol}}^{\mathrm{avg}}$ \\
\midrule
Vanilla DeepONet             & 33,655,537 & 2,573.96  & 1.32E-02 & 1.01E-01 & 0.773510 & 7.18E-03 \\
POD-DeepONet (256 modes)     & 30,256,312 & \textbf{1,070.18}  & 9.48E-03 & 8.55E-02 & 0.840091 & 4.23E-03 \\
POD-DeepONet (512 modes)     & 30,606,776 & 1,097.56  & 9.48E-03 & 8.27E-02 & 0.840934 & 4.23E-03 \\
Two-stage DeepONet           & 47,346,905 & 5,899.65  & 1.28E-02 & 1.06E-01 & 0.777642 & 6.49E-03 \\
FNO-100                      & 33,637,633 & 7,904.27  & 1.22E-03 & 1.04E-02 & 0.990162 & 3.95E-04 \\
FNO-100-LS (NE)            & 33,637,633  &7,946.37  & \textbf{1.08E-03} & \textbf{1.01E-02} & \textbf{0.990738} & \textbf{3.46E-04} \\
FNO-100-LS (QR)            & 33,637,633  & 7,999.52 & \textbf{1.08E-03} & \textbf{1.01E-02} & \textbf{0.990738} & \textbf{3.46E-04} \\
\bottomrule
\end{tabular}
\end{table}

Table~\ref{table_example2} reports the higher-amplitude benchmark, which is the more demanding test.  In this benchmark, the performance gap widens. FNO substantially outperforms all tested DeepONet-type baselines in average relative $L^2$ error, worst-case relative $L^2$ error, average contact-set IoU, and average obstacle-violation error. The same qualitative ranking is visible across the three metrics: the FNO predictions not only reduce bulk field error, but also recover the contact geometry more accurately and exhibit less penetration of the obstacle. For the ``-LS'' rows, the reported time includes both Stage~1 end-to-end FNO training and Stage~2 readout refitting. The additional Stage~2 cost is \(42.10\)~s for the normal-equation implementation and \(95.25\)~s for the QR implementation.

The change from the lower- to the higher-amplitude benchmark is consistent with the structure of obstacle problems. As the obstacle amplitude increases within this self-affine family, the oscillatory features become more pronounced and induce more mobile and more intricate contact regions. In this regime, the solution family is less well captured by a fixed low-dimensional linear representation. In particular, moving contact regions, thin branches, small isolated components, and highly curved segments of the free boundary are difficult to represent accurately by a basis that is fixed once and for all. The higher-amplitude results therefore suggest that the adaptive grid-based features learned by FNO are better matched to this benchmark than the fixed-basis or branch--trunk representations tested here.

The three metrics are especially important in this regime. A gap in relative \(L^2\) error alone would already indicate a substantial difference in field accuracy, but the contact-set IoU and obstacle-violation metrics show that the observed advantage is not merely one of bulk regression. The larger obstacle amplitude makes geometric recovery of the contact set and
feasibility with respect to the unilateral constraint significantly harder. Thus, on the present higher-amplitude benchmark, the FNO advantage extends beyond field accuracy to structural properties that are specific to obstacle problems. This should still be read as a benchmark-specific conclusion rather than as a universal claim about all obstacle problems, but it is supported by the present data.

\begin{table}[htbp]
\centering
\caption{Effect of the post-training least-squares readout refit for FNO at different training stages on the higher-amplitude benchmark \((\alpha=\pi/5)\). The parameter counts reported in the table are total model parameter counts. For rows marked ``LS'', only the final 129-parameter affine readout is refit in Stage~2.}
\label{table_example2_ls}
\scriptsize 
\begin{tabular}{lcccccc}
\toprule
Method & Total params. & Total time (s) & $E_{L^2}^{\mathrm{avg}} $ &  $E_{L^2}^{\max}$   & $E_{\mathrm{IoU}}^{\mathrm{avg}}$ & $E_{\mathrm{viol}}^{\mathrm{avg}}$ \\
\midrule
FNO-200                    & 33,637,633 & 15,790.50 & 9.26E-04 & 9.18E-03 & 0.991628 & 3.04E-04 \\
FNO-200-LS (NE)            & 33,637,633 & 15,832.64 & \textbf{9.06E-04} & \textbf{9.14E-03} & \textbf{0.991705} & \textbf{2.89E-04} \\
FNO-200-LS (QR)            & 33,637,633 & 15,885.76 & \textbf{9.06E-04} & \textbf{9.14E-03} & \textbf{0.991705} & \textbf{2.89E-04} \\
FNO-100                    & 33,637,633 & 7,904.27  & 1.22E-03 & 1.04E-02 & 0.990162 & 3.95E-04 \\
FNO-100-LS (NE)            & 33,637,633 & 7,946.37  & 1.08E-03 & 1.01E-02 & 0.990738 & 3.46E-04 \\
FNO-100-LS (QR)            & 33,637,633 & 7,999.52  & 1.08E-03 & 1.01E-02 & 0.990738 & 3.46E-04 \\
FNO-50                     & 33,637,633 & 3,961.86  & 2.59E-03 & 1.29E-02 & 0.986638 & 7.55E-04 \\
FNO-50-LS (NE)             & 33,637,633 & 4,003.95  & 1.62E-03 & 1.20E-02 & 0.986790 & 4.98E-04 \\
FNO-50-LS (QR)             & 33,637,633 & 4,057.07  & 1.62E-03 & 1.20E-02 & 0.986790 & 4.98E-04 \\
FNO-20                     & 33,637,633 & 1,589.23  & 4.67E-03 & 1.86E-02 & 0.972967 & 1.02E-03 \\
FNO-20-LS (NE)             & 33,637,633 & 1,631.39  & 2.78E-03 & 1.61E-02 & 0.977798 & 8.57E-04 \\
FNO-20-LS (QR)             & 33,637,633 & 1,684.43  & 2.78E-03 & 1.61E-02 & 0.977798 & 8.57E-04 \\
FNO-10                     & 33,637,633 & \textbf{797.14}    & 6.66E-03 & 2.30E-02 & 0.959126 & 2.84E-03 \\
FNO-10-LS (NE)             & 33,637,633 & 839.20    & 3.77E-03 & 2.09E-02 & 0.962871 & 1.21E-03 \\
FNO-10-LS (QR)             & 33,637,633 & 892.38    & 3.77E-03 & 2.09E-02 & 0.962871 & 1.21E-03 \\
\bottomrule
\end{tabular}
\end{table}

Table~\ref{table_example2_ls} shows that the qualitative role of the least-squares readout refit persists in the higher-amplitude regime. Again, the readout refit is most useful for partially trained FNO backbones, whereas the gains become modest once the end-to-end model is already well trained. For example, the average error drops substantially for FNO-10 and FNO-20 after refitting, whereas the improvement from FNO-200 to FNO-200-LS is comparatively small. 

This again supports the diagnostic interpretation of the readout refit. In the early and intermediate training regimes, the FNO backbone has already extracted useful grid-based features, but the final affine readout can still be improved by a full-data least-squares solve. Once the backbone and readout have been jointly optimized for sufficiently many epochs, the LS correction becomes smaller, indicating that the remaining error is less attributable to readout suboptimality.

The POD-DeepONet baselines in Table~\ref{table_example2} use 256 and 512 POD modes, whereas the standard FNO uses 128 learned latent features in its final pointwise readout. These are not identical objects: the former are fixed linear output bases, while the latter are data-adaptive features learned end to end, so the comparison should not be overstated. Nevertheless, the comparison is still  meaningful at the level of effective representation efficiency. From this viewpoint, the results suggest that, on the present higher-amplitude benchmark, the learned FNO representation is more effective than the fixed POD bases considered here.

\begin{figure}[htbp]
    \centering
    \safeincludegraphics[width=0.8\linewidth]{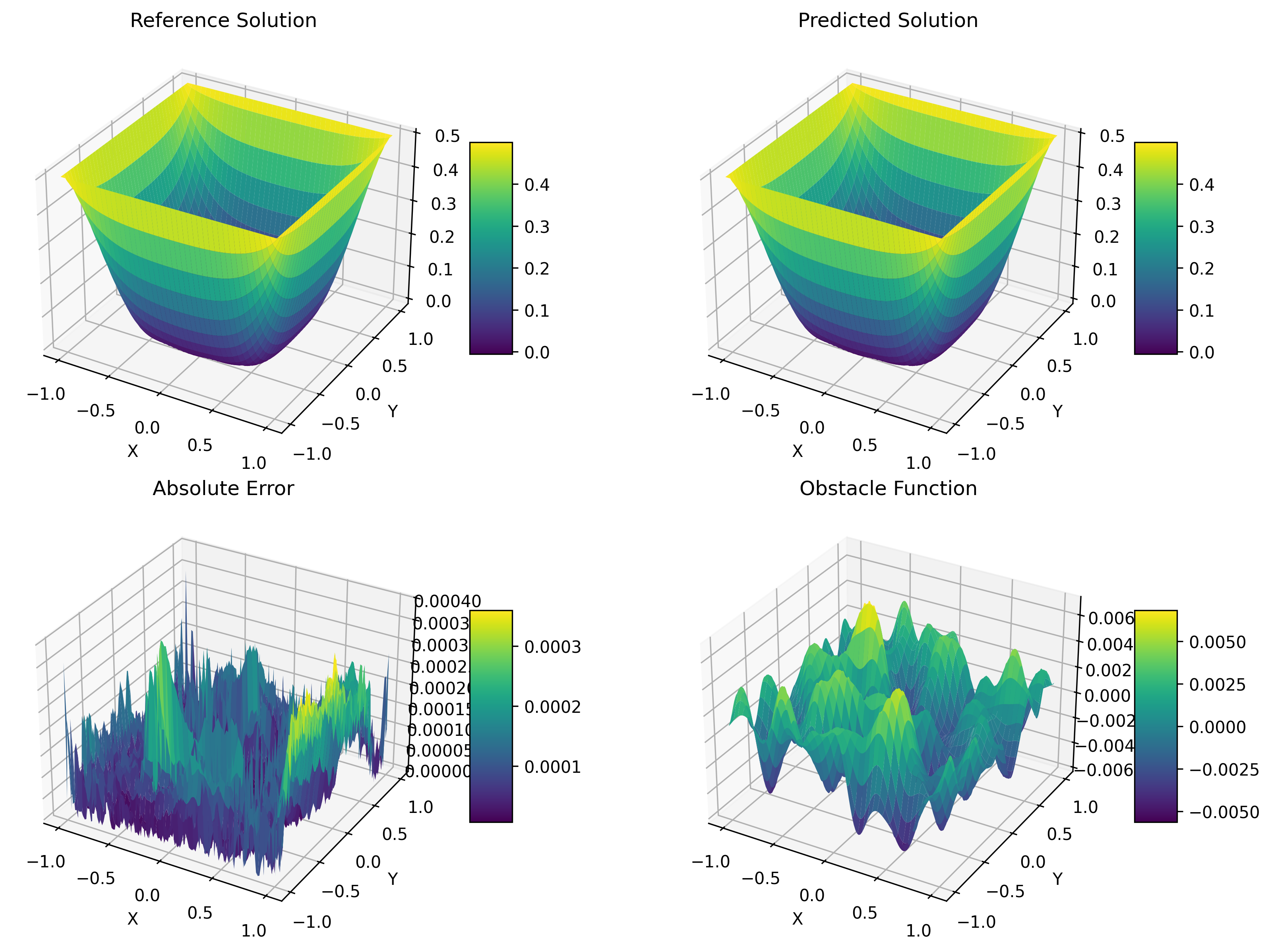}
    \caption{Best-case FNO-100 prediction on the test set in Example~2, selected by the smallest sample-wise relative $L^2$ error. The four panels show the reference solution, the predicted solution, the absolute error, and the obstacle field. For this sample, the relative $L^2$ error is $0.000453$ and the relative obstacle-violation error is $0.000188$.}
    \label{exm2_best}
\end{figure}

\begin{figure}[htbp]
    \centering
    \safeincludegraphics[width=0.8\linewidth]{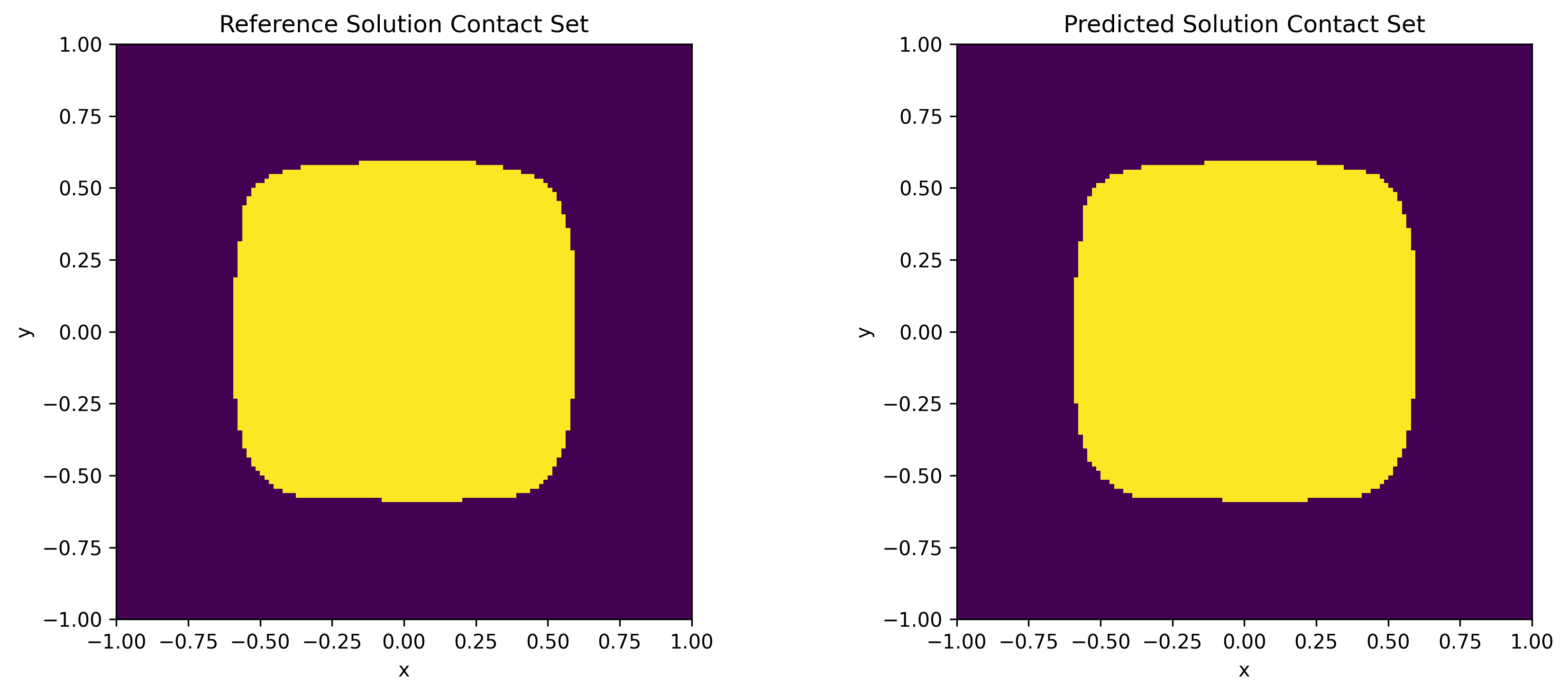}
    \caption{Comparison of the reference and predicted contact sets for the best-case sample shown in Figure~\ref{exm2_best}. The reference contact set is shown on the left and the predicted contact set on the right; contact regions are highlighted in yellow. The contact-set IoU is $0.998312$.}
    \label{exm2_best_contact}
\end{figure}

\begin{figure}[htbp]
    \centering
    \safeincludegraphics[width=0.8\linewidth]{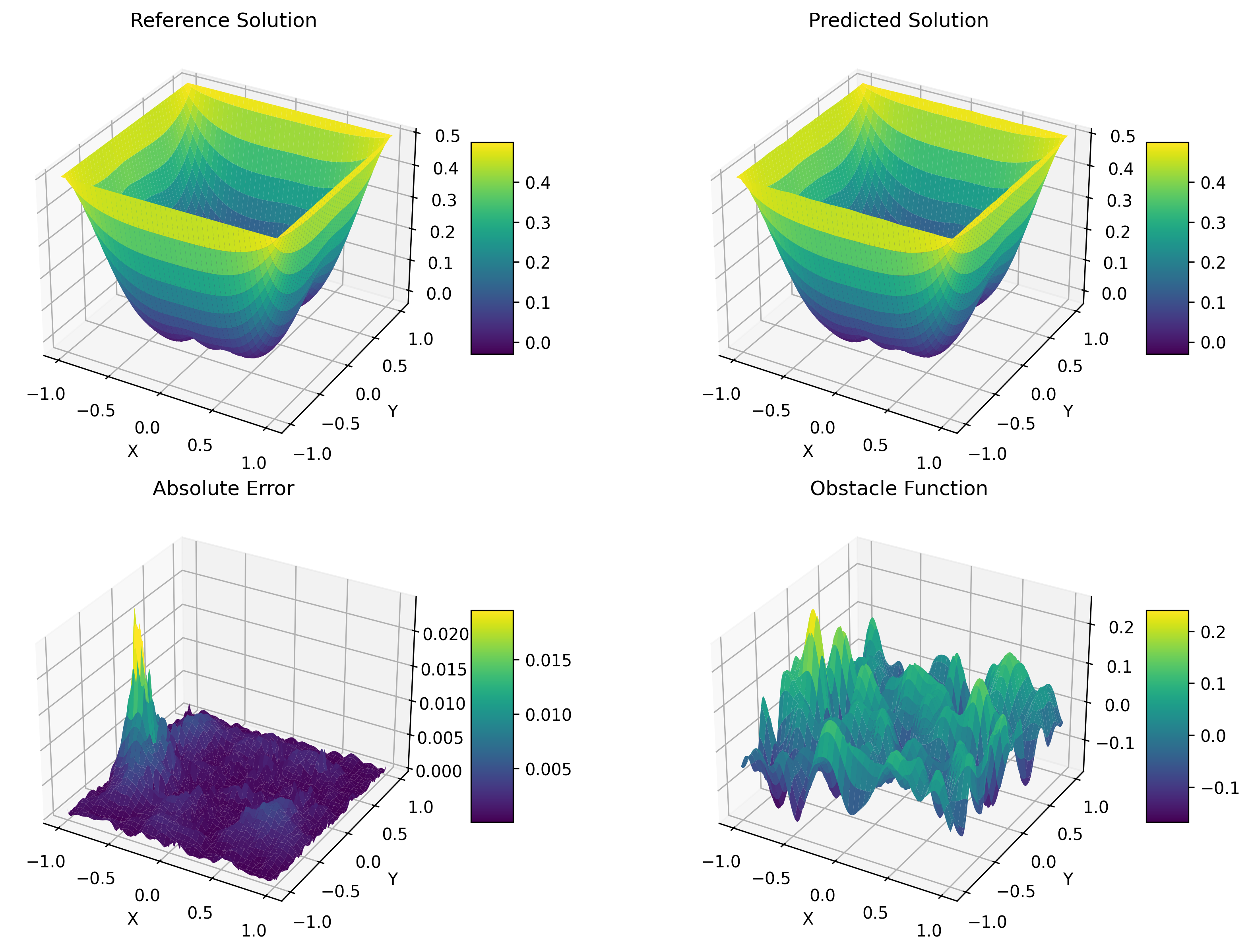}
    \caption{Worst-case FNO-100 prediction on the test set in Example~2, selected by the largest sample-wise relative $L^2$ error. The four panels show the reference solution, the predicted solution, the absolute error, and the obstacle field. For this sample, the relative $L^2$ error is $0.010426$ and the relative obstacle-violation error is $0.003104$.}
    \label{exm2_worst}
\end{figure}

\begin{figure}[htbp]
    \centering
    \safeincludegraphics[width=0.8\linewidth]{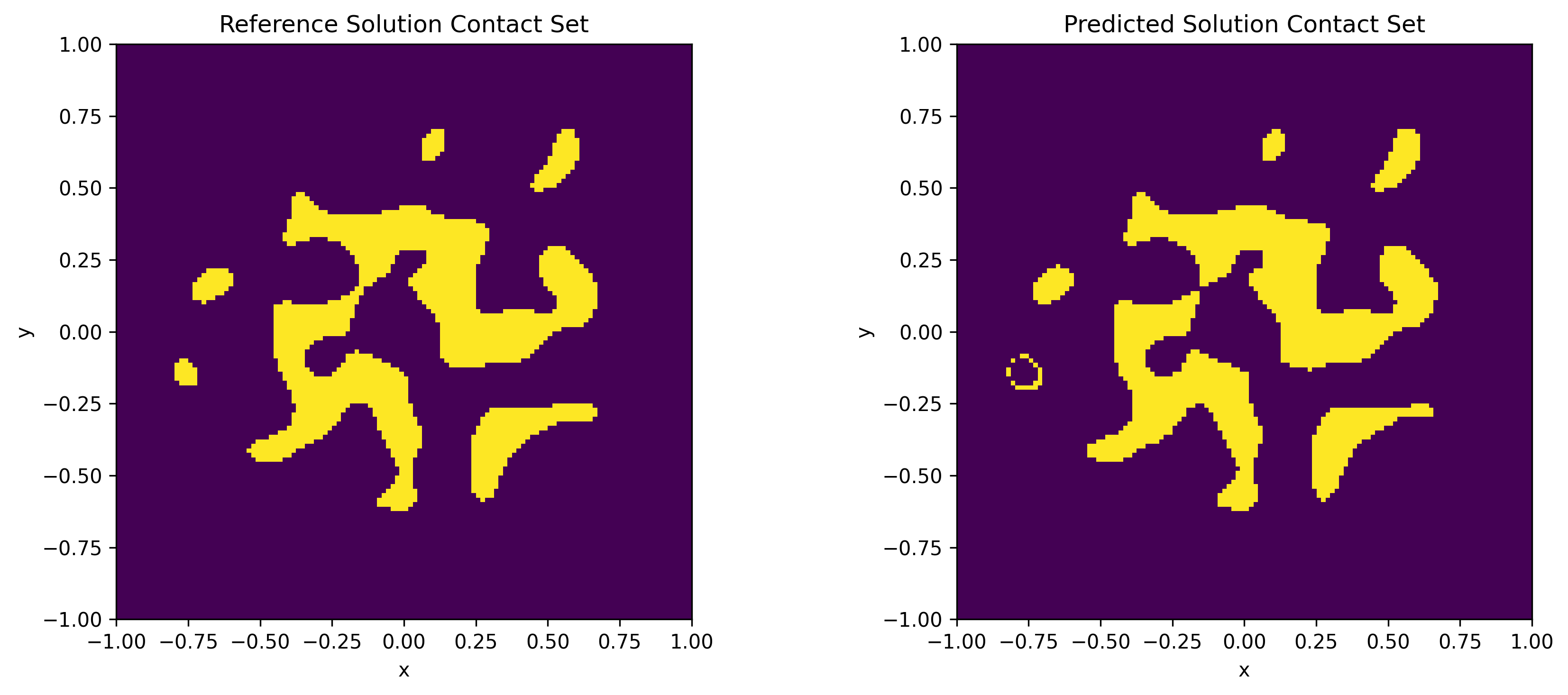}
    \caption{Comparison of the reference and predicted contact sets for the worst-case sample shown in Figure~\ref{exm2_worst}. The reference contact set is shown on the left and the predicted contact set on the right; contact regions are highlighted in yellow. The contact-set IoU is $0.938409$.}
    \label{exm2_contact}
\end{figure}

Figures~\ref{exm2_worst}--\ref{exm2_contact} complement the aggregate metrics by showing where the remaining errors occur in the higher-amplitude regime. Even on the worst-case FNO-100 test sample, selected by the largest sample-wise relative \(L^2\) error, the predicted solution still captures the global shape of the reference solution well. The dominant errors remain localized rather than global, but they are more sharply concentrated near regions where the contact set changes topology or bends rapidly. The contact-set plots convey the same message from a geometric viewpoint: the predicted contact region matches the dominant branches and components, while the remaining discrepancies are concentrated on thin protrusions, small isolated components, and highly curved boundary segments. These regions are particularly challenging for data-driven surrogates, since identification of the contact set there is especially sensitive to small perturbations in the predicted field.

\section{Conclusion}

We introduced FNO-LS, a Fourier-neural-operator method with a post-training least-squares readout refit for learning random obstacle-to-solution maps. After end-to-end FNO training, the nonlinear backbone is frozen and the final affine readout is recomputed by solving a linear least-squares problem over the training data. This gives the empirical squared-error optimal readout for the learned frozen features at low additional cost, without modifying the nonlinear representation.

Numerical experiments on two finite-band self-affine random obstacle ensembles show that FNO-LS gives the strongest overall performance among the tested models, especially in the higher-amplitude regime where the contact geometry is more intricate. The contact-set IoU and obstacle-violation metrics indicate that the improvement is not limited to bulk field accuracy, but also concerns structural features specific to obstacle problems.

Future work will consider jointly random coefficients and obstacles, nonrectangular geometries, and architectures or loss functions that enforce the unilateral constraint more explicitly.


\bigskip
\begin{thebibliography}{99}

\bibitem{Alphonse2024EVI}
A. Alphonse, M. Hinterm\"uller, A. Kister, C. H. Lun, and C. Sirotenko, A neural network approach to learning solutions of a class of elliptic variational inequalities, arXiv: 2411.18565, 2024.

\bibitem{Atkinson2009Theoretical}
K. Atkinson and W. Han, \emph{Theoretical Numerical Analysis: A Functional Analysis Framework}, Third Edition, Springer, New York, 2009.

\bibitem{Babuska2002}
I. Babu\v{s}ka and P. Chatzipantelidis, On solving elliptic stochastic partial differential equations, \emph{Computer Methods in Applied Mechanics and Engineering}, \textbf{191} (2002), 4093--4122.

\bibitem{Babuska2004}
I. Babu\v{s}ka, R. Tempone, and G. E. Zouraris, Galerkin finite element approximations of stochastic elliptic partial differential equations, \emph{SIAM Journal on Numerical Analysis}, \textbf{42} (2004), 800--825.

\bibitem{Babuska2007}
I. Babu\v{s}ka, F. Nobile, and R. Tempone, A stochastic collocation method for elliptic partial differential equations with random input data, \emph{SIAM Journal on Numerical Analysis}, \textbf{45} (2007), 1005--1034.

\bibitem{ElBahja2025PINNObstacle}
H. El Bahja, J. C. Hauffen, P. Jung, B. Bah, and I. Karambal, A physics-informed neural network framework for modeling obstacle-related equations, \emph{Nonlinear Dynamics}, {\textbf 113} (2025), 12533--12544. 

\bibitem{Bierig2015}
C. Bierig and A. Chernov, Convergence analysis of multilevel Monte Carlo variance estimators and application for random obstacle problems, \emph{Numerische Mathematik}, \textbf{130} (2015), 579--613.

\bibitem{Cheng2023ObstacleDNN}
X. Cheng, X. Shen, X. Wang, and K. Liang, A deep neural network-based method for solving obstacle problems, \emph{Nonlinear Analysis: Real World Applications}, \textbf{72} (2023), 103864.

\bibitem{Darehmiraki2022Obstacle}
M. Darehmiraki, A deep learning approach for the obstacle problem, In \emph{Proceedings of Academia-Industry Consortium for Data Science: AICDS 2020}, Springer Nature, Singapore, 2022, 179--188.

\bibitem{DL1976}
G. Duvaut and J.-L. Lions, \emph{Inequalities in Mechanics and Physics}, Springer-Verlag, Berlin--New York, 1976.

\bibitem{EJK2005}
C. Eck, J. Jaru\v{s}ek, and M. Krbec, \emph{Unilateral Contact Problems: Variational Methods and Existence Theorems}, Monographs and Textbooks in Pure and Applied Mathematics, Vol.~270, CRC Press, Boca Raton, 2005.

\bibitem{EigelHeissSchutte2025}
M. Eigel, C. Hei\ss{}, and J. E. Sch\"utte, Multi-level neural networks for high-dimensional parametric obstacle problems, arXiv: 2504.05026, 2025.

\bibitem{Forster2010}
R. Forster and R. Kornhuber, A polynomial chaos approach to stochastic variational inequalities, \emph{Journal of Numerical Mathematics}, \textbf{18} (2010), 235--255.

\bibitem{Gao2025ProxPINNs}
Y. Gao, Y. Song, Z. Tan, H. Yue, and S. Zeng, Prox-PINNs: A Deep Learning Algorithmic Framework for Elliptic Variational Inequalities, arXiv: 2505.14430, 2025.

\bibitem{Glowinski1984}
R. Glowinski, \emph{Numerical Methods for Nonlinear Variational Problems}, Springer-Verlag, New York, 1984.

\bibitem{TLG1981}
R. Glowinski, J.-L. Lions, and R. Tr\'emoli\`eres, \emph{Numerical Analysis of Variational Inequalities}, North-Holland, Amsterdam, 1981.

\bibitem{HR2013}
W. Han and B. D. Reddy, \emph{Plasticity: Mathematical Theory and Numerical Analysis}, Second Edition, Springer, New York, 2013.

\bibitem{HS2002}
W. Han and M. Sofonea, \emph{Quasistatic Contact Problems in Viscoelasticity and Viscoplasticity}, Studies in Advanced Mathematics, Vol.~30, American Mathematical Society, Providence, RI; International Press, Somerville, MA, 2002.

\bibitem{Hlavacek1988}
I. Hlav\'a\v{c}ek, J. Haslinger, J. Ne\v{c}as, and J. Lov\'i\v{s}ek, \emph{Solution of Variational Inequalities in Mechanics}, Springer-Verlag, New York, 1988.

\bibitem{Hueber2005}
S. H\"ueber and B. I. Wohlmuth, A primal-dual active set strategy for non-linear multibody contact problems, \emph{Computer Methods in Applied Mechanics and Engineering}, \textbf{194} (2005), 3147--3166.

\bibitem{Karkkainen2003}
T. K{\"a}rkk{\"a}inen, K. Kunisch, and P. Tarvainen, Augmented Lagrangian active set methods for obstacle problems, \emph{Journal of Optimization Theory and Applications}, \textbf{119} (2003), 499--533.

\bibitem{KO1988}
N. Kikuchi and J. T. Oden, \emph{Contact Problems in Elasticity: A Study of Variational Inequalities and Finite Element Methods}, SIAM, Philadelphia, 1988.

\bibitem{Kinderlehrer2000}
D. Kinderlehrer and G. Stampacchia, \emph{An Introduction to Variational Inequalities and Their Applications}, SIAM, Philadelphia, 2000.

\bibitem{Kornhuber2014}
R. Kornhuber, C. Schwab, and M. W. Wolf, Multilevel Monte Carlo finite element methods for stochastic elliptic variational inequalities, \emph{SIAM Journal on Numerical Analysis}, \textbf{52} (2014), 1243--1268.

\bibitem{Kovachki2021FNOTheory}
N. B. Kovachki, S. Lanthaler, and S. Mishra, On universal approximation and error bounds for Fourier neural operators, \emph{Journal of Machine Learning Research}, \textbf{22} (2021), 1--76.

\bibitem{Kovachki2023NeuralOperator}
N. B. Kovachki, Z. Li, B. Liu, K. Azizzadenesheli, K. Bhattacharya, A. M. Stuart, and A. Anandkumar, Neural operator: Learning maps between function spaces with applications to PDEs, \emph{Journal of Machine Learning Research}, \textbf{24} (2023), 1--97.

\bibitem{LanthalerStuart2026}
S. Lanthaler and A. M. Stuart, The parametric complexity of operator learning, \emph{IMA Journal of Numerical Analysis}, \textbf{46} (2026), 647--712.

\bibitem{Lee2024}
S. Lee and Y. Shin, On the Training and Generalization of Deep Operator Networks, \emph{SIAM Journal on Scientific Computing}, \textbf{46} (2024), C273--C296.

\bibitem{Li2021FNO}
Z. Li, N. Kovachki, K. Azizzadenesheli, B. Liu, K. Bhattacharya, A. M. Stuart, and A. Anandkumar, Fourier neural operator for parametric partial differential equations, In \emph{International Conference on Learning Representations}, 2021.

\bibitem{Lu2021DeepONet}
L. Lu, P. Jin, G. Pang, Z. Zhang, and G. E. Karniadakis, Learning nonlinear operators via DeepONet based on the universal approximation theorem of operators, \emph{Nature Machine Intelligence}, \textbf{3} (2021), 218--229.

\bibitem{Lu2022Comparison}
L. Lu, X. Meng, S. Cai, Z. Mao, S. Goswami, Z. Zhang, and G. E. Karniadakis, A comprehensive and fair comparison of two neural operators (with practical extensions) based on FAIR data, \emph{Computer Methods in Applied Mechanics and Engineering}, \textbf{393} (2022), 114778.

\bibitem{Persson2006}
B. N. J. Persson, Contact mechanics for randomly rough surfaces, \emph{Surface Science Reports}, \textbf{61} (2006), 201--227.

\bibitem{Ro1987}
J. F. Rodrigues, \emph{Obstacle Problems in Mathematical Physics}, North-Holland, Amsterdam, 1987.

\bibitem{Schwab2006}
C. Schwab and R. A. Todor, Karhunen--Lo\`eve approximation of random fields by generalized fast multipole methods, \emph{Journal of Computational Physics}, \textbf{217} (2006), 100--122.

\bibitem{SchwabStein2022ProxNet}
C. Schwab and A. Stein, Deep solution operators for variational inequalities via proximal neural networks, \emph{Research in the Mathematical Sciences}, \textbf{9} (2022), 36.

\bibitem{Todor2007}
R. A. Todor and C. Schwab, Convergence rates for sparse chaos approximations of elliptic problems with stochastic coefficients, \emph{IMA Journal of Numerical Analysis}, \textbf{27} (2007), 232--261.

\bibitem{Wang2021PIDeepONet}
S. Wang, H. Wang, and P. Perdikaris, Learning the solution operator of parametric partial differential equations with physics-informed DeepONets, \emph{Science Advances}, \textbf{7} (2021), eabi8605.

\bibitem{wang2010discontinuous}
F. Wang, W. Han, and X. Cheng, Discontinuous Galerkin methods for solving elliptic variational inequalities, \emph{SIAM Journal on Numerical Analysis}, \textbf{48} (2010), 708--733.

\bibitem{wang2014discontinuous}
F. Wang, W. Han, and X. Cheng, Discontinuous Galerkin methods for solving a quasistatic contact problem, \emph{Numerische Mathematik}, \textbf{126} (2014), 771--800.

\bibitem{wang2018friction}
F. Wang and H. Wei, Virtual element method for simplified friction problem, \emph{Applied Mathematics Letters}, \textbf{85} (2018), 125--131.

\bibitem{wang2020obstacle}
F. Wang and H. Wei, Virtual element methods for the obstacle problem, \emph{IMA Journal of Numerical Analysis}, \textbf{40} (2020), 708--728.

\bibitem{Xiu2007}
D. Xiu, Efficient collocational approach for parametric uncertainty analysis, \emph{Communications in Computational Physics}, \textbf{2} (2007), 293--309.

\bibitem{Zhao2022Obstacle}
X. E. Zhao, W. Hao, and B. Hu, Two neural-network-based methods for solving elliptic obstacle problems, \emph{Chaos, Solitons \& Fractals}, \textbf{161} (2022), 112313.

\bibitem{Zhu2026Obstacle}
C. Zhu, F. Wang, and W. Han,  Numerical Analysis of Stochastic Elliptic Variational Inequalities of the First Kind, arXiv: 2604.25111, 2026.

\end{thebibliography}
\end{document}